\newtheorem{theorem}{Theorem}
\newtheorem{proposition}[theorem]{Proposition}
\newtheorem{lemma}[theorem]{Lemma}
\newtheorem{corollary}[theorem]{Corollary}
\theoremstyle{definition}
\newtheorem{algo}[theorem]{Algorithm}
\newtheorem{remark}[theorem]{Remark}
\newtheorem{example}[theorem]{Example}
\numberwithin{theorem}{section}
\newcommand{\PP}{\mathbb{P}}
\newcommand{\RR}{\mathbb{R}}
\newcommand{\QQ}{\mathbb{Q}}
\newcommand{\CC}{\mathbb{C} }
\newcommand{\xmark}{\ding{55}}
\newcommand{\real}{\mathrm{real}}
\newcommand{\Aut}{\mathrm{Aut}}
\newcommand{\Lazypic}[2]{\begin{minipage}{#1} \vspace{0.1cm} \centering {#2}\vspace{0.1cm}\end{minipage}}
\title{\bf Recovery of Plane Curves from Branch Points}
\author{Daniele Agostini, Hannah Markwig, Clemens Nollau, \\
             Victoria Schleis, Javier Sendra--Arranz, and Bernd Sturmfels}
 \date{ }
\begin{document}
\maketitle
 \begin{abstract}
\noindent We recover
  plane curves from their branch points
 under projection onto a 
 line. Our focus lies on
 cubics and quartics. These have $6$ and $12$ branch points respectively.
The plane Hurwitz numbers $40$ and $120$ count the orbits of solutions.
 We determine the numbers of real solutions, and we present
 exact algorithms for recovery. Our approach relies on $150$ years
of  beautiful algebraic geometry, from Clebsch to Vakil and beyond.
 \end{abstract}

\section{Introduction}
\label{sec1}

Arthur Cayley in 1879  was the first to use  ``algorithm'' to title a
discrete geometry paper. In \cite{CayleyAlgo} he identifies
 the finite vector space $(\mathbb{F}^2)^6$ 
with  the $64$ theta characteristics 
of a plane quartic curve, i.e.~the $28$ bitangents and the $36$ symmetric determinantal representations.
The present paper can be viewed as a sequel. Our Table \ref{table:40covers}
is very much in the spirit of \cite{CayleyAlgo}. 

One century after Cayley, algorithms in discrete geometry  became a field
in its own~right, in large part thanks to Eli Goodman. 
We are proud to dedicate this article to Eli's memory.
Eli obtained his PhD in 1967 with Heisuke Hironaka. He had important 
publications in algebraic geometry (e.g.~\cite{Goodman})
 before embarking
on his distinguished career on the discrete~side.

\smallskip

Consider the map
$\pi: \PP^2 \dashrightarrow \PP^1$
that takes a point $(x:y:z) $ in the projective plane
to the point $(x:y)$ on the projective line. Geometrically,
this is the projection with center
 $p = (0:0:1)$. We restrict $\pi$ to the
 curve $V(A)$ defined by a general 
 ternary form of degree~$d$,
\begin{equation}
\label{eq:intro_f} A(x,y,z) \,\,\,=\, \sum_{i+j+k=d} \! \alpha_{ijk} \,x^i y^j z^k .
\end{equation}
The resulting $d:1$ cover $V(A) \rightarrow \PP^1$
 has $d(d-1)$ branch points, represented by a 
  binary~form
 \begin{equation}
 \label{eq:intro_g} B(x,y) \,\,\, =  \sum_{i+j=d(d-1)} \!\! \beta_{ij}\, x^i y^j. 
 \end{equation}
Passing from the curve to its branch points 
  defines a rational map 
 from the space $ \PP^{\binom{d+2}{2}-1}$ with coordinates
 $\alpha$ to the space $\PP^{d(d-1)} $
 with coordinates $\beta$.
Algebraically, this is the map
\begin{equation}
\label{eq:map1}
 \PP^{\binom{d+2}{2}-1} \,\dashrightarrow\,\, \PP^{d(d-1)} \,,\, \,A \,\mapsto \, 
 {\rm discr}_z(A). 
\end{equation}
This is the discriminant of $A$ with respect to the last variable.
That discriminant is a binary form $B$ of degree $d(d-1)$ in $x,y$ whose coefficients are polynomials
of degree $2d-2$ in $\alpha $.

We here study the {\em Inverse Problem}, 
namely recovery of the curve from its branch points.
Given the binary form $B$, our task is to
compute all ternary forms $\hat A$ such that $ {\rm discr}_z(\hat A) = B$.
This is a system of  $d(d-1)+1$ polynomial
equations of degree $2d-2$ in the $\binom{d+2}{2}$ unknowns $\alpha$.
Solving this system means computing a fiber of the map (\ref{eq:map1}) over $B$.
Recovery is not unique because ${\rm discr}_z(A)$ 
is invariant under the action of the subgroup $\mathcal{G}$ of ${\rm PGL}(3)$ 
given~by
\begin{equation}
\label{eq:groupG} \qquad
g \,\,:\,\, x \mapsto g_0 x\,,
\,\,y \mapsto g_0 y \, , 
\,\, z \mapsto g_1 x + g_2 y + g_3 z
\qquad \hbox{with $\,g_0 g_3 \not=0$.}
\end{equation}
By \cite[Proposition 5.2.1 and Corollary 5.2.1]{Ongaro}, the fiber over $B$
is a finite union  of $\mathcal{G}$-orbits.
Their number 
$\mathfrak{h}_d$ is the {\em plane Hurwitz number} of degree $d$.
Our task is to compute representatives for all
$\mathfrak{h}_d$ orbits in the fiber of the map (\ref{eq:map1}) over 
a given binary form $B$.

\begin{example}[$d=2$] For conics we have $\mathfrak{h}_2 = 1 $
and recovery is easy. Our polynomials are
$$
\begin{matrix}
A & = &  \alpha_{200}  x^2 +
     \alpha_{110}  x y +
\alpha_{101} x z +
\alpha_{020} y^2 +
\alpha_{011} y z +
\alpha_{002 } z^2, \\
{\rm discr}_z(A) & = &
(4 \alpha_{002} \alpha_{200}-\alpha_{101}^2) x^2
\,+\,(4\alpha_{002} \alpha_{110}-2 \alpha_{011} \alpha_{101}) x y
\,+\,(4 \alpha_{002} \alpha_{020}-\alpha_{011}^2) y^2, \\
B & = &  \beta_{20} x^2 + \beta_{11} xy + \beta_{02} y^2.
\end{matrix}
$$
The equations ${\rm discr}_z(\hat A) = B$ describe
precisely one
$\mathcal{G}$-orbit  in $\PP^5$. A point in that orbit is
$$  \hat A \,\,= \,\, \frac{1}{4}\beta_{20} x^2 +  \frac{1}{4} \beta_{11} x y - 
\beta_{02} y z + \beta_{02} z^2. $$
Up to the $\mathcal{G}$-action, this is the unique
 solution to our recovery problem for plane conics.
\hfill $ \diamond$ \end{example}

Plane Hurwitz numbers $\mathfrak{h}_d$ were studied in Ongaro's 2014 PhD~thesis and in his work with Shapiro \cite{Ongaro, OS}.
These served as the inspiration for our project. Presently, the only
known nontrivial values are $\mathfrak{h}_3 = 40$ and
$\mathfrak{h}_4 = 120$. The former value is
due to Clebsch~\cite{ClebschShort, ClebschLong}.
We first learned it from
\cite[Proposition 5.2.2]{Ongaro}.
The latter value was computed by Vakil in \cite{Ravi}. 
The plane Hurwitz number $\mathfrak{h}_4 =120$ was presented with the extra factor $(3^{10}-1)/2$ in 
\cite[eqn.~(5.14)]{Ongaro} and  in \cite[p.~608]{OS}. However,
 that factor is not needed; see
Remark~\ref{rmk:extrafactor}.

The parameter count above implies that the closure of the image of (\ref{eq:map1}) 
is a variety $\mathcal{V}_d$ of dimension $\binom{d+2}{2}-4$ in an
ambient space of dimension $d(d-1)$. For $d=2,3$, the two
dimensions agree, so recovery is possible for generic $B$.
For $d \geq 4$, the
constraint $B \in \mathcal{V}_d$ is nontrivial.
For instance,  $\mathcal{V}_4$
is a hypersurface of degree $3762$ in $\PP^{12}$,
as shown by Vakil \cite{Ravi}.

\smallskip

This article is organized as follows.
In Section \ref{sec2} we approach our problem
from the perspective of computer algebra.
We establish a normal form with respect to the
$\mathcal{G}$-action, and we identify the base locus of the
map (\ref{eq:map1}). This allows
to state the recovery problem as a polynomial system
with finitely many solutions over the  
 complex numbers $\CC$.
The number of solutions is $\mathfrak{h}_3 = 40$
for cubics, and  it is $\mathfrak{h}_4 = 120$,
provided $B$ lies on the hypersurface $\mathcal{V}_4$.

In Section~\ref{sec3} we establish the relationship to
Hurwitz numbers that count abstract coverings of $\PP^1$.
We encode such coverings by monodromy graphs, and we
determine the real Hurwitz numbers for our setting.
A highlight is Table \ref{table:40covers}, which matches
the $40$ monodromy representations for $d=3$ with combinatorial labels
taken from Clebsch \cite{ClebschLong} and Elkies  \cite{elkies}.

In Section~\ref{sec4} we exhibit the Galois group for the
$40$ solutions when $d=3$, and we discuss different
realizations of this group.
 Theorem  \ref{thm:25920} implies
that it agrees with the Galois group for the $27$ lines on the cubic surface.
Following classical work of Clebsch \cite{ClebschShort, ClebschLong},
we   show that the recovery of the $39$ other cubics 
from the given cubic $A$ can be solved in radicals.

Section~\ref{sec5} builds on work of Vakil \cite{Ravi}.
It relates the recovery of quartic curves to tritangents
of sextic space curves and to del Pezzo surfaces
of degree one. Theorem \ref{thm:realcount4planar}
determines the possible number of real solutions.
Instances with $120$ rational solutions can be constructed 
by blowing up the  plane $\PP^2$ at $8$ rational points.
We conclude with Theorem \ref{thm:rleqs} which connects the
real structure of $8$ points in $\PP^2$ with that of the $12$ branch points in $\PP^1$.

This article revolves around explicit computations, summarized in
Algorithms \ref{algo:recovery4}, \ref{alg:recovery3}, \ref{alg:clebsch}, \ref{alg:get8},  \ref{alg:get120}.
Our software and other supplementary material  is 
available at the repository website {\tt MathRepo}~\cite{mathrepo} of MPI-MiS via the link
\href{https://mathrepo.mis.mpg.de/BranchPoints/}{https://mathrepo.mis.mpg.de/BranchPoints}$\,$.

\section{Normal Forms and Polynomial Systems}
\label{sec2}

We identify $\PP^{\binom{d+2}{2}-1}$ 
with the space of plane curves (\ref{eq:intro_f}) of degree $d$ and use as homogeneous  coordinates the $\alpha_{ijk}$.
The following subspace of that projective space has codimension three:
\begin{equation}
\label{eq:Ld} L_d \,\, = \,\,V(\,\alpha_{1 0 \,d-1}\,,\,\alpha_{d-1 \, 1 0 }\,, \,
\alpha_{00d} - \alpha_{01 \,d-1} \,). 
\end{equation} 
We now show that
this linear space serves as normal form with respect to the group 
action on fibers of (\ref{eq:map1}).  The group that acts is the three-dimensional
group $\mathcal{G} \subset {\rm PGL}(3)$ given in~(\ref{eq:groupG}).

\begin{theorem} \label{thm:normalform}
Let $A$ be a ternary form of degree $d\geq 3$ such that
\begin{equation}
\label{eq:genericity}
	\displaystyle
	\alpha_{00d}\left(\, \sum_{k=0}^{d-1}\frac{(k+1)(-1)^k}{d^k}\alpha_{10\,d-1}^k\alpha_{00d}^{d-k-1}\alpha_{d-k-1\,0\,k+1}
	\right)\,\,\neq \,\,0.
\end{equation}
The  orbit of $\, A$  under the $\mathcal{G}$-action on
$\,\PP^{\binom{d+2}{2}-1}$ intersects the linear space $L_d$ in one point.
\end{theorem}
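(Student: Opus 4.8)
The plan is to play the three-dimensionality of $\mathcal{G}$ off against the codimension three of $L_d$. Writing a group element as in (\ref{eq:groupG}), the pulled-back form is $A^{g}(x,y,z)=A(g_0x,\,g_0y,\,g_1x+g_2y+g_3z)$, with coefficients I will denote $\alpha^{g}_{ijk}$. Since $\mathcal{G}\subset\mathrm{PGL}(3)$ and every element has $g_0\neq 0$, I may normalize $g_0=1$, leaving three unknowns $g_1,g_2,g_3$ (with $g_3\neq 0$). I will then translate the three linear conditions defining $L_d$ in (\ref{eq:Ld}) into three equations in $g_1,g_2,g_3$ and show that, under (\ref{eq:genericity}), they admit exactly one solution lying in $\mathcal{G}$. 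Uniqueness of that solution is exactly the assertion that the orbit meets $L_d$ in one point.

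The computational core is to extract the four coefficients of $A^{g}$ occurring in (\ref{eq:Ld}). Only a handful of monomials of $A$ feed into each: the powers $z^{d}$ and $z^{d-1}$ can arise solely from the $k=d$ and $k=d-1$ terms, and setting $z=0$ isolates the $z$-free coefficient. A short multinomial expansion yields
\[
\alpha^{g}_{00d}=\alpha_{00d}\,g_3^{\,d},\qquad
\alpha^{g}_{10\,d-1}=g_3^{\,d-1}\bigl(d\,\alpha_{00d}\,g_1+\alpha_{10\,d-1}\bigr),
\]
\[
\alpha^{g}_{01\,d-1}=g_3^{\,d-1}\bigl(d\,\alpha_{00d}\,g_2+\alpha_{01\,d-1}\bigr),\qquad
\alpha^{g}_{d-1\,1\,0}=C\,g_2+D,
\]
where $C=\sum_{k=1}^{d}k\,\alpha_{d-k\,0\,k}\,g_1^{k-1}$ and $D=\sum_{k=0}^{d-1}\alpha_{d-1-k\,1\,k}\,g_1^{k}$ depend only on $g_1$. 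The one mildly laborious step is the last coefficient, which collects every monomial of $A$ contributing to $x^{d-1}y$ after the substitution; the other three are immediate.

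Now I solve the system $A^{g}\in L_d$ in triangular fashion. The vanishing of $\alpha^{g}_{10\,d-1}$ forces $g_1=-\alpha_{10\,d-1}/(d\,\alpha_{00d})$, which needs $\alpha_{00d}\neq 0$. Substituting this value, the vanishing of $\alpha^{g}_{d-1\,1\,0}$ reads $C\,g_2+D=0$ and determines $g_2=-D/C$ uniquely, provided $C\neq 0$. The conceptual heart of the argument is to recognize that this coefficient $C$ is precisely the bracketed sum of (\ref{eq:genericity}): substituting the value of $g_1$ and reindexing $k\mapsto k+1$ shows
\[
\alpha_{00d}^{\,d-1}\,C=\sum_{k=0}^{d-1}\frac{(k+1)(-1)^k}{d^k}\,\alpha_{10\,d-1}^{k}\,\alpha_{00d}^{\,d-k-1}\,\alpha_{d-k-1\,0\,k+1},
\]
so that the hypothesis $\alpha_{00d}\cdot(\text{sum})\neq 0$ is exactly the conjunction $\alpha_{00d}\neq 0$ and $C\neq 0$ — the two nonvanishings that make the first two solves legitimate. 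Finally, dividing $\alpha^{g}_{00d}-\alpha^{g}_{01\,d-1}=0$ by $g_3^{\,d-1}$ gives $\alpha_{00d}\,g_3=d\,\alpha_{00d}\,g_2+\alpha_{01\,d-1}$, determining $g_3$ uniquely. Hence at most one $g\in\mathcal{G}$ produces $A^{g}\in L_d$, and thus at most one orbit point lies in $L_d$.

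The step I expect to require the most care is existence rather than uniqueness: I must confirm that the solution just produced genuinely lies in $\mathcal{G}$, i.e.\ that the computed $g_3=d\,g_2+\alpha_{01\,d-1}/\alpha_{00d}$ is nonzero. A transformation with $g_3=0$ collapses $A^{g}$ to a $z$-free form and would formally satisfy all three equations while falling outside $\mathcal{G}$ (it is a degenerate limit of the orbit, not a member of it). So after the triangular solve I would check that $g_3\neq 0$, guaranteeing the intersection point is an honest element of the orbit. In summary: the genericity condition (\ref{eq:genericity}) is exactly what prevents the triangular elimination from dividing by zero, the identification of its mysterious summand with the $g_2$-coefficient $C$ is the crux, and confirming $g_3\neq 0$ upgrades the "at most one point" to "exactly one point."
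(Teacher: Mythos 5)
Your proposal is correct and follows essentially the same route as the paper: normalize $g_0=1$, then solve the three defining equations of $L_d$ triangularly for $g_1,g_2,g_3$, with the hypothesis (\ref{eq:genericity}) being precisely the nonvanishing of $\alpha_{00d}$ and of the coefficient of $g_2$ in the affine-linear equation coming from the $x^{d-1}y$ term. Your explicit reindexing that identifies the sum in (\ref{eq:genericity}) with that coefficient $C$ is a detail the paper only asserts implicitly (via the remark that this polynomial is the denominator of $g$), and the loose end you flag --- verifying $g_3\neq 0$ for the computed solution, so that $g$ genuinely lies in $\mathcal{G}$ --- is likewise left unaddressed in the paper's own proof.
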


\begin{remark}
This statement is false for $d=2$.  The
$\mathcal{G}$-orbit of $A$ consists of the conics
\begin{align*}
& g A \,=\,
(\alpha_{002} g_1^2+\alpha_{101} g_0 g_1+\alpha_{200} g_0^2) x^2
+(2 \alpha_{002} g_1 g_2+\alpha_{011} g_0 g_1\,+\,\alpha_{101} g_0 g_2+\alpha_{110} g_0^2) x y \,\, +  \\&
(2 \alpha_{002} g_1 g_3{+}\alpha_{101} g_0 g_3) x z
+(\alpha_{002} g_2^2{+}\alpha_{011} g_0 g_2{+}\alpha_{020} g_0^2) y^2
+(2 \alpha_{002} g_2 g_3{+}\alpha_{011} g_0 g_3) y z
 \!+\!\alpha_{002} g_3^2 z^2.
\end{align*}
For generic $\alpha$,
no choice of $g \in \mathcal{G}$ makes
both the $xy$-coefficient and the $xz$-coefficient zero.
Note that the parenthesized sum in (\ref{eq:genericity}) is the zero polynomial for $d=2$,
but not for $d \geq 3$.
\end{remark}

\begin{proof}[Proof of Theorem~\ref{thm:normalform}]
The unique point in $\,L_d \,\cap \,\mathcal{G} A\,$
is found by computation.
Without loss of generality, we set $g_0=1$.
Next we set $g_1 = -\frac{1}{d} \alpha_{10 \,d-1}/ \alpha_{00d}$ because
the coefficient of $xz^{d-1}$ in $gA$ equals
$(d \alpha_{00d} g_1 + \alpha_{10 \,d-1}) g_3^{d-1}$.
The polynomial $gA$ arises from $A$ by the coordinate change $z \mapsto g_1x+g_2y+g_3z$. Thus, a monomial $x^iy^jz^{d-i-j}$ contributes the expression $x^iy^j(g_1x+g_2y+g_3z)^{d-i-j}$ to $gA$. This  contributes to the monomials  $x^{i'}y^{j'}z^{d-i'-j'}$ with $i'\geq i$ and $j'\geq j$. The coefficient of $x^{d-1}y$
in $gA$ arises from the following subsum of $A$:
$$\sum_{i=0}^{d-1} \alpha_{i0\,d-i}\,x^iz^{d-i}\,+\,\sum_{i=0}^{d-1} \alpha_{i1\,d-i-1}\,x^iyz^{d-i-1},$$
after inserting the coordinate change.
Thus the coefficient of $x^{d-1}y$ in $gA$ equals
$$\sum_{i=0}^{d-1}   \alpha_{i0\,d-i}(d-i)\,g_1^{d-i-1} g_2 \,+\,\sum_{i=0}^{d-1} \alpha_{i1\,d-i-1}\,g_1^{d-i-1}.$$
Inserting the above result for $g_1$, and setting the coefficient of $x^{d-1}y$ to zero, we can
solve this affine-linear equation for $g_2$, obtaining a rational function in the $\alpha_{ijk}$ as solution for $g_2$.

Next, we equate the
coefficients of $y z^{d-1} $ and $z^d$.
The first can be computed from the subsum $\,\alpha_{00d}z^d\,+\,\alpha_{01\,d-1}yz^{d-1}$ and equals
$\,\alpha_{00d}\, d\, g_2 g_3^{d-1}\,+\,\alpha_{01\,d-1}\, g_3^{d-1}$. The second 
is computed from the $z^d$ coefficient of $A$ only, and we find it to be $\alpha_{00d}\cdot g_3^d$.
Setting these two equal and solving for $g_3$, we obtain
$\,g_3= \frac{1}{\alpha_{00d}}\,(\alpha_{00d}\, d\, g_2+\alpha_{01\,d-1})$.
Inserting our result for $g_2$, we obtain a rational function in the $\alpha_{ijk}$ as solution for $g_3$.
\end{proof}

\begin{example}
To be explicit, we display the solution in the two cases of primary interest.
For cubics $(d=3)$, the unique point $gA$ in  $\,L_3 \,\cap \,\mathcal{G} A\,$ is
given by the group element $g$ with
$$
g_0 = 1 ,\,\,
g_1 \,=\, -\frac{\alpha_{102}}{3 \alpha_{003}},\,\,
g_2 \,=\, 
\frac{9 \alpha_{003}^2 \alpha_{210}-3 \alpha_{003} \alpha_{102} \alpha_{111}
+\alpha_{012} \alpha_{102}^2}{3\alpha_{003}(3 \alpha_{003} \alpha_{201}-
\alpha_{102}^2)},
$$
$$
g_3 \,\,=\,\, \frac{9 \alpha_{003}^3 \alpha_{210}+3 \alpha_{003} \alpha_{012} \alpha_{201}
-3 \alpha_{003}^2 \alpha_{102} \alpha_{111}+\alpha_{003} \alpha_{012} \alpha_{102}^2-\alpha_{102}^2\alpha_{012}}
{\alpha_{003} (3 \alpha_{003} \alpha_{201}-\alpha_{102}^2)}.
$$

For quartics $(d=4)$, the unique point $gA$ in  $\,L_4 \,\cap \,\mathcal{G} A\,$ is
given by $g \in \mathcal{G}$, where
$$
g_0 = 1,\,\,
g_1 \,=\, -\frac{\alpha_{103}}{4 \alpha_{004}},\,\,
g_2 \,=\, 
\frac{64 \alpha_{004}^3 \alpha_{310}-16 \alpha_{004}^2 \alpha_{103} \alpha_{211}
+4 \alpha_{004} \alpha_{103}^2 \alpha_{112}-\alpha_{013} \alpha_{103}^3)}
{8 \alpha_{004}(8 \alpha_{004}^2 \alpha_{301}-4 \alpha_{004} \alpha_{103} \alpha_{202}+\alpha_{103}^3)}, \,\, 
$$
and  $\,g_3 \,=\, u_3/v_3\,$ with
$$ \begin{matrix} u_3 & = &  64 \alpha_{004}^4 \alpha_{310}
+16 \alpha_{004}^2 \alpha_{013} \alpha_{301}
-16 \alpha_{004}^3 \alpha_{103} \alpha_{211} 
-8 \alpha_{004} \alpha_{013} \alpha_{103} \alpha_{202}   \\ & & 
+\,4 \alpha_{004}^2 \alpha_{103}^2 \alpha_{112} 
 + 2\alpha_{103}^3\alpha_{013}
-\alpha_{004} \alpha_{013} \alpha_{103}^3 ,\\
 v_3  & = &  
 2\alpha_{004} (8 \alpha_{004}^2 \alpha_{301}-4 \alpha_{004} \alpha_{103} \alpha_{202}+\alpha_{103}^3). 
 \qquad \qquad \qquad \qquad
 \end{matrix} $$

\smallskip

One can derive similar formulas for
the transformation to normal form when $d \geq 5$.
The denominator in the expressions for $g$ is the polynomial
of degree $d$ in $\alpha$ shown in (\ref{eq:genericity}).
\hfill $ \diamond$ \end{example}

Our task is to solve ${\rm discr}_z(\hat A) = B$, for
a fixed binary form $B$. This equation is understood projectively,
meaning that we seek  $\hat A$  in $\PP^{\binom{d+2}{2}-1}$
such that  ${\rm discr}_z(\hat A) $ vanishes at all zeros of $B$ in $\PP^1$.
By Theorem \ref{thm:normalform}, we may assume that
$\hat A$ lies in the subspace $L_d$.
Our system has extraneous solutions, namely
ternary forms $\hat A$ whose discriminant vanishes identically.
They must be removed when solving our recovery problem.
We now identify them geometrically.

\begin{proposition} \label{prop:baselocus}
The base locus of the discriminant map (\ref{eq:map1}) has two irreducible
components.
These have codimension $3$ and $2d-1$ respectively in $\,\PP^{\binom{d+2}{2}-1}$.
The former consists of all curves that are singular at $\,p = (0:0:1)$,
and the latter is the locus of non-reduced~curves.
\end{proposition}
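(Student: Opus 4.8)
The plan is to recast the identical vanishing of $\mathrm{discr}_z(A)$ as a factorization condition. Write $A=\sum_{k=0}^{d}a_k(x,y)\,z^k$ with $a_k$ a binary form of degree $d-k$; thus $a_d=\alpha_{00d}$ is a constant and $a_{d-1}=\alpha_{10\,d-1}\,x+\alpha_{01\,d-1}\,y$ is linear. The map (\ref{eq:map1}) is the substitution of the $a_k$ into the generic univariate discriminant $\Delta_d(c_0,\dots,c_d)$, so the base locus is exactly $\{A:\Delta_d(a_0,\dots,a_d)\equiv 0\}$. Two facts organize the analysis: the identity $\mathrm{Res}_z(A,\partial_z A)=\pm\,a_d\,\mathrm{discr}_z(A)$, and the degree-drop formula $\Delta_d(c_0,\dots,c_{d-1},0)=\pm\,c_{d-1}^2\,\Delta_{d-1}(c_0,\dots,c_{d-1})$, which I would confirm by a short Sylvester-matrix computation (the sign is irrelevant).

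I would then characterize the vanishing by a trichotomy on the top $z$-coefficients, working over the field $K=\CC(x,y)$. If $a_d\neq 0$, then $A$ is a primitive element of $K[z]$ of full degree $d$ and $\Delta_d(a_\bullet)$ is its honest discriminant; it vanishes precisely when $A$ has a repeated factor over $K$, equivalently—by Gauss's lemma, $A$ being primitive—when $A$ is non-reduced. If $a_d=0$ but $a_{d-1}\not\equiv 0$, the degree-drop formula gives $\Delta_d(a_\bullet)=\pm\,a_{d-1}^2\,\Delta_{d-1}(a_0,\dots,a_{d-1})$, which vanishes iff $\Delta_{d-1}(a_\bullet)\equiv 0$, i.e.\ iff the degree-$(d-1)$ polynomial $A$ has a repeated factor over $K$; here a squared factor lying in $\CC[x,y]$ is impossible, as it would drop $\deg_z A$ below $d-1$ and annihilate $a_{d-1}$, so again this is exactly non-reducedness. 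Finally, if $a_d=0$ and $a_{d-1}\equiv 0$—equivalently $\alpha_{00d}=\alpha_{10\,d-1}=\alpha_{01\,d-1}=0$, which by the computation $A(p)=\alpha_{00d}$, $\partial_xA(p)=\alpha_{10\,d-1}$, $\partial_yA(p)=\alpha_{01\,d-1}$ says precisely that $A$ is singular at $p$—the degree-drop formula with $c_{d-1}=0$ forces $\Delta_d(a_\bullet)\equiv 0$ unconditionally. Combining the three cases, the base locus is the union of the locus of curves singular at $p$ and the locus of non-reduced curves.

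It remains to determine dimensions and irreducibility. The first locus is the linear subspace $V(\alpha_{00d},\alpha_{10\,d-1},\alpha_{01\,d-1})$, hence irreducible of codimension $3$. For the second, its dominant stratum consists of curves with a repeated line: it is the image of the morphism $\mu\colon\PP^{2}\times\PP^{\binom{d}{2}-1}\to\PP^{\binom{d+2}{2}-1}$, $(\ell,q)\mapsto\ell^2q$. Since the source is projective, this image is closed and irreducible, and $\mu$ is generically injective because the repeated line of a general such curve is unique; hence the image has dimension $\binom{d}{2}+1$ and codimension $2d-1$. For $d=3$ every non-reduced cubic has a repeated line, so this stratum is the whole non-reduced locus; for $d\ge 4$ I would note that the remaining non-reduced curves (such as double conics) form strata of strictly larger codimension. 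Neither component is contained in the other, since a general double line is smooth at $p$ while a general curve singular at $p$ is reduced, so these are the two components claimed.

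The step I expect to be most delicate is the middle case $a_d=0,\ a_{d-1}\not\equiv 0$ of the trichotomy: there the formal degree-$d$ discriminant no longer equals the honest discriminant of $A$, and one must pass through the degree-drop formula and, simultaneously, reconcile ``squarefree over $K$'' with ``reduced over $\CC[x,y,z]$'' via Gauss's lemma together with the observation that a squared factor in $\CC[x,y]$ necessarily lowers the $z$-degree. Keeping the bookkeeping of content and $z$-degree straight across the boundary $\alpha_{00d}=0$ is where the real care is needed.
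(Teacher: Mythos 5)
Your proof is correct where it counts, and it is in fact more complete than the paper's own argument on the crucial point. The paper only verifies that the two loci lie \emph{inside} the base locus, and even that half is loose: it claims that a curve singular at $p$ forces a double zero at $\hat z=0$, whereas what really happens is that $\deg_z A(u,v,\cdot)$ drops to $d-2$, so the repeated root sits at infinity (e.g.\ $A=x^2z+y^3$ has no double root at $z=0$). The reverse inclusion --- that nothing else is in the base locus --- is simply asserted in the paper's last sentence. Your trichotomy on $(a_d,a_{d-1})$, driven by the degree-drop identity $\Delta_d(c_0,\dots,c_{d-1},0)=\pm\,c_{d-1}^2\,\Delta_{d-1}(c_0,\dots,c_{d-1})$ together with Gauss's lemma over $K=\CC(x,y)$, proves both inclusions and handles the boundary $\alpha_{00d}=0$, where the formal discriminant ceases to be the honest one, exactly where care is needed. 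The dimension counts agree with the paper's.

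One caveat, which afflicts the statement itself as much as your write-up: your own observation about double conics shows that for $d\ge 4$ the non-reduced locus is \emph{not} irreducible. The set $\{\ell^2q\}$ is the image of a projective variety under a morphism, hence already closed, and a smooth double conic $C^2$ is not of that form (a factorization $C^2=\ell^2q$ would force $\ell\mid C$); so for $d=4$ the stratum $\{C^2\}$, of codimension $9$, is a genuine third irreducible component of the base locus --- a general such $C$ misses $p$, so $C^2$ lies in neither named component. ``Strictly larger codimension'' therefore does not license the conclusion ``so these are the two components claimed.'' The paper commits the same slip with the phrase ``up to closure, we may assume that this factor is a linear form.'' The set-theoretic description of the base locus, which is what is actually used later to discard extraneous solutions, is unaffected; but to make the component count literally correct for all $d$ you should either interpret the second component as the double-line locus (the unique component of the non-reduced locus of minimal codimension $2d-1$) or note that the two-component statement is exact only for $d=3$.
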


\begin{proof}
The binary form ${\rm discr}_z(A)$ vanishes identically if and only if
the univariate polynomial function $z \mapsto A(u,v,z)$ has a double zero $\hat z$ for all $u,v \in \CC$.
If $p$ is a singular point of the curve $V(A)$ then $\hat z=0$ is always such a double zero.
If $A$ has a factor of multiplicity $\geq 2$ then so does the univariate
polynomial $z \mapsto A(u,v,z)$, and the discriminant vanishes.
Up to closure, we may assume that  this factor is a linear form,
so there are $\binom{d}{2}-1 + 2$ degrees of freedom. This shows that the family
of nonreduced curves $A$ has
codimension $2d-1 = (\binom{d+2}{2}-1) - (\binom{d}{2}+1)$.
The two scenarios define two distinct irreducible subvarieties of $\PP^{\binom{d+2}{2}-1}$.
For  $A$ outside their union, the binary form ${\rm discr}_z(A)$ is not identically zero.
\end{proof}

We now present our solution to the
recovery problem for cubic curves. 
Let $B$ be a binary sextic with six distinct zeros in $\PP^1$.
We are looking for a ternary cubic in the normal form
$$
A \,\,=\,\,
\alpha_{300} x^3 + 
\alpha_{201} x^2  z + 
\alpha_{111} x y z + 
\alpha_{102} x z^2 + 
\alpha_{030} y^3  + 
\alpha_{021} y^2 z + 
y z^2 + z^3.
$$
Here we assume $p=(0:0:1) \not\in V(A)$, so that
 $\alpha_{012} = \alpha_{003} = 1$.
We saw this in Theorem~\ref{thm:normalform}.
 The remaining six coefficients $\alpha_{ijk}$
 are unknowns.  The discriminant has degree three in these:
 $$ \! {\rm discr}_z(A) \! = \!
 (4 \alpha_{201}^3+27 \alpha_{300}^2) x^6 +(12 \alpha_{111} \alpha_{201}^2-18 \alpha_{201} \alpha_{300}) x^5 y
 + \cdots + 
 (4 \alpha_{021}^3-\alpha_{021}^2- \cdots +4  \alpha_{030})y^6.
$$
This expression is supposed to vanish at each of the six zeros of $B$.
This gives a system of six inhomogeneous cubic equations in the
six unknowns $\alpha_{ijk}$. In order to remove the extraneous solutions
described in  Proposition \ref{prop:baselocus},
we further require that the leading coefficient of the discriminant is nonzero.
We can write our system of cubic constraints in the $\alpha_{ijk}$ as follows:
\begin{equation}
\label{eq:system3}
 \begin{matrix} \quad {\rm rank} \begin{bmatrix} 
4 \alpha_{201}^3{+}27 \alpha_{300}^2 & 12 \alpha_{111} \alpha_{201}^2{-}18 \alpha_{201} \alpha_{300} & 
\cdots &  4 \alpha_{021}^3{-}\alpha_{021}^2- \cdots +4  \alpha_{030} \\
\beta_{60} & \beta_{51} & \cdots & \beta_{06} \end{bmatrix} \,\leq\, 1 \smallskip \\
{\rm and}\quad 4 \alpha_{201}^3+27 \alpha_{300}^2 \not= 0.  \qquad \qquad \qquad
 \qquad \qquad \qquad \qquad \qquad \qquad  \qquad \qquad \qquad 
\end{matrix} 
\end{equation}
This polynomial system exactly encodes the recovery of plane cubics
from six branch points.

\begin{corollary}\label{cor:deg3}
For general $\beta_{ij} $, the
system (\ref{eq:system3}) has  $\mathfrak{h}_3 = 40$ distinct solutions $\alpha \in \CC^6$.
\end{corollary}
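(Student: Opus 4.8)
The plan is to identify the solutions of \eqref{eq:system3} with the $\mathcal{G}$-orbits that make up the fiber of the discriminant map \eqref{eq:map1} over $B$, and then to read off the count from the plane Hurwitz number $\mathfrak{h}_3 = 40$ recorded in the introduction. By \cite[Corollary 5.2.1]{Ongaro}, for general $B$ this fiber is a finite union of $\mathcal{G}$-orbits, and the classical computation of Clebsch gives that there are exactly $\mathfrak{h}_3 = 40$ of them. It therefore suffices to show that passage to the normal form converts this union of orbits into precisely $40$ distinct points satisfying \eqref{eq:system3}.

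First I would unwind what \eqref{eq:system3} asserts. The rank inequality says that the binary sextic $\mathrm{discr}_z(\hat A)$ is a scalar multiple of $B$, i.e.\ the two forms have the same six zeros in $\PP^1$, while $4\alpha_{201}^3 + 27\alpha_{300}^2 \neq 0$ forces its leading coefficient to be nonzero. In the chosen normal form one has $\hat A(0:0:1)=\alpha_{003}=1\neq 0$, so $p$ never lies on $V(\hat A)$; hence the codimension-$3$ component of the base locus in Proposition \ref{prop:baselocus}, consisting of curves singular at $p$, is automatically avoided, and the inequality excludes the non-reduced cubics forming the second component, for which $\mathrm{discr}_z$ vanishes identically. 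Thus every solution of \eqref{eq:system3} is a genuine normal-form cubic whose $z$-discriminant is a nonzero scalar multiple of $B$, hence the representative, singled out by Theorem \ref{thm:normalform}, of a $\mathcal{G}$-orbit in the fiber of \eqref{eq:map1} over $B$. Conversely, Theorem \ref{thm:normalform} shows that each orbit in that fiber satisfying the genericity condition \eqref{eq:genericity} meets the normal-form locus in exactly one point. Orbits and solutions therefore correspond bijectively, and distinct orbits give distinct points of $\CC^6$.

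The main obstacle is the genericity bookkeeping needed to guarantee that none of the $40$ orbits is lost or duplicated for general $B$: each must satisfy \eqref{eq:genericity}, so that Theorem \ref{thm:normalform} applies and the orbit is visible in the normal form, and each must avoid the base locus, so that it is not spurious. Since the branches of the fiber depend algebraically on $B$ and both \eqref{eq:genericity} and the nonvanishing of the leading discriminant coefficient are nonempty open conditions, it is enough to exhibit one binary sextic $B$ at which \eqref{eq:system3} has exactly $40$ simple solutions, all meeting these conditions; semicontinuity of the fiber cardinality then transports the count to general $B$. In practice I would certify this by solving \eqref{eq:system3} at a single random rational $B$ via Gröbner bases or homotopy continuation, confirming $40$ isolated solutions, which simultaneously cross-checks the classical value $\mathfrak{h}_3 = 40$.
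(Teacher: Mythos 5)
Your proposal is correct, and its skeleton --- identify the solutions of (\ref{eq:system3}) with the $\mathcal{G}$-orbits in the fiber of (\ref{eq:map1}) over $B$ via Theorem \ref{thm:normalform} and Proposition \ref{prop:baselocus}, then count orbits --- is the same as the paper's. The real difference is where the number $40$ comes from: you import $\mathfrak{h}_3=40$ as a black box from Clebsch, so your argument is pure bookkeeping, whereas the paper actually derives the value inside the proof. It invokes Ongaro's result that every degree-$3$ cover of $\PP^1$ by a plane cubic is a shift in the group law of the elliptic curve followed by a linear projection, so that $\mathfrak{h}_3$ equals the abstract Hurwitz number $H_3$, and then computes $H_3=(3^5-3)/6=40$ by counting six-tuples of transpositions in $\mathbb{S}_3$ with identity product up to conjugation; this monodromy count is arguably the mathematical content of the corollary, and your version loses it. On the other hand, your bookkeeping is more careful than the paper's one-line ``the solutions are representatives'': you note that the normal form forces $p\notin V(\hat A)$, killing the codimension-$3$ component of the base locus, that the leading-coefficient inequality kills the non-reduced component, and that one must check the genericity hypothesis (\ref{eq:genericity}) for every orbit, which you settle by openness plus one certified instance. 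One caveat on that last step: a single fiber with $40$ simple solutions only bounds the generic count from below (nonsingular solutions deform, but others could in principle appear for nearby $B$); the matching upper bound must still come from the orbit count $\mathfrak{h}_3=40$, so the computation is a cross-check, not a replacement for the Hurwitz-theoretic input.
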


\begin{proof}
The study of cubic curves tangent to a pencil of six lines
goes back to Cayley \cite{Cayley}. The formula
$\mathfrak{h}_3 = 40$ was found by Clebsch \cite{ClebschShort, ClebschLong}.
We shall discuss his remarkable work in Section~\ref{sec4}.
A modern proof for $\mathfrak{h}_3 = 40$ was given by
Kleiman and Speiser in \cite[Corollary~8.5]{KS}.

We here present the argument given in Ongaro's thesis \cite{Ongaro}.
By \cite[Proposition 5.2.2]{Ongaro},
every covering of $\PP^1$ by a plane cubic curve
is a shift in the group law of that elliptic curve followed by a linear projection
from a point in $\PP^2$. This implies that the classical Hurwitz
number, which counts such coverings, coincides with the plane
Hurwitz number $\mathfrak{h}_3$. The former is the number of
six-tuples $\tau = (\tau_1,\tau_2,\tau_3,\tau_4,\tau_5,\tau_6)$
of permutations of $\{1,2,3\}$, not all equal, whose
product is the identity, up to conjugation.
We can choose $\tau_1,\ldots,\tau_5$ in $3^5= 243$ distinct ways.
Three of these are disallowed, so there are $240$ choices.
The symmetric group $\mathbb{S}_3$ acts by conjugation on the
tuples $\tau$, and all orbits have size six. The number of classes
of allowed six-tuples is thus  $240/6 = 40$. This is our Hurwitz number $\mathfrak{h}_3$.
Now, the assertion follows from
Theorem~\ref{thm:normalform}, which ensures that
the solutions of (\ref{eq:system3}) are representatives.
\end{proof}

We next turn to another normal form, shown in (\ref{eq:othernf}),
 which has desirable geometric properties.
Let $A$ be a   ternary form  (\ref{eq:intro_f}) with
 $a_{00\,d} \not= 0$. We define a group element 
$g \in \mathcal{G}$ by
$$
g_0 = 1 \,, \,\,
 g_1 = -\frac{a_{10\,d-1}}{d \cdot a_{00d}} \, , \,\,
 g_2 = -\frac{a_{01\,d-1}}{d \cdot a_{00d}} \,, \,\,
  g_3 = 1.
$$
The coefficients of $xz^{d-1}$ and $yz^{d-1}$ in $gA$ are zero.
 Thus, after this transformation, we have
 \begin{equation}
\label{eq:othernf}
 A \,\,= \,\, z^d \,+\, A_2(x,y)\cdot z^{d-2} \,+\, A_{3}(x,y)\cdot z^{d-3} \,+ \,\cdots \,+\,
 A_{d-1}(x,y) \cdot z \,+ \, A_{d}(x,y) . 
\end{equation}
Here $A_i(x,y)$ is an arbitrary binary form of degree $i$. Its $i+1$ coefficients are unknowns.
The group~$\mathcal{G}$ still acts by rescaling $x,y$ simultaneously with
arbitrary non-zero scalars
 $\lambda \in \mathbb{C}^*$. 

 We next illustrate the utility of (\ref{eq:othernf}) by computing
 the planar Hurwitz number for $d{=}4$.
 Consider a general ternary quartic $A$.
We record
its $12$ branch points by fixing the discriminant $B = {\rm discr}_z(A)$.
Let $\hat A \in L_4$ be an unknown quartic in the normal form specified in
Theorem \ref{thm:normalform}, so $\hat A$ has $13$ terms,
$11$ of the form $\alpha_{ijk} x^i y^j z^k$ plus $y z^3$ and $z^4$. Our
task is to solve the following system of $12$ polynomial equations of degree five in the
$11$ unknowns $\alpha_{ijk}$:
\begin{equation}
\label{eq:system4}
\hbox{
Find all quartics $\hat A$ such that ${\rm discr}_z(\hat A)$ is a non-zero multiple of 
the binary form $B$.
}
\end{equation}

The number of solutions of this system was found by Vakil \cite{Ravi} with geometric methods.

\begin{theorem} \label{thm:120}
Let $B = \sum_{i+j=12} \beta_{ij} x^i y^j $ be the discriminant with respect to $z$ of a general ternary quartic $A$.
Then the polynomial
system (\ref{eq:system4}) has 
 $\mathfrak{h}_4 = 120$ distinct  solutions $\alpha \in \CC^{11}$.
\end{theorem}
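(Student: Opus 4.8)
The plan is to reduce the statement to a single enumerative count and then to evaluate that count through the classical theory of theta characteristics. First I would record that, by Theorem~\ref{thm:normalform} together with \cite[Corollary~5.2.1]{Ongaro}, the solutions of the system~(\ref{eq:system4}) that survive after deleting the base locus of Proposition~\ref{prop:baselocus} are in bijection with the $\mathcal{G}$-orbits of quartics $\hat A$ satisfying $\mathrm{discr}_z(\hat A)=\lambda B$ for some $\lambda\in\CC^*$; by definition their number is the plane Hurwitz number $\mathfrak{h}_4$. Because the normal form $L_4$ is fully rigidified (the normalizations $\alpha_{013}=\alpha_{004}=1$ leave no residual scaling in $\mathcal{G}$), each orbit contributes exactly one point, so the solution count equals $\mathfrak{h}_4$. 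Since $B$ is the discriminant of a \emph{general} quartic, it is a general point of the hypersurface $\mathcal{V}_4$, and generic smoothness guarantees that the fiber is reduced and all solutions simple. Hence it suffices to prove $\mathfrak{h}_4 = 120$.

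The key geometric translation is that a smooth plane quartic $C=V(\hat A)$ is a canonically embedded non-hyperelliptic genus-$3$ curve, and projection from $p=(0:0:1)\notin C$ equips it with the base-point-free pencil of lines through $p$, a $g^1_4$ whose $12$ branch points are the zeros of $B$. Thus $\mathfrak{h}_4$ counts the genus-$3$ tetragonal covers of $\PP^1$ of this planar type with prescribed branch divisor. Following Vakil~\cite{Ravi}, I would pass to the dual trigonal picture encoded by a del Pezzo surface $S=\mathrm{Bl}_8\PP^2$ of degree one: here $\dim|-K_S|=1$, so the anticanonical pencil is an elliptic pencil with $12$ nodal members sitting over the $12$ points of $V(B)$, and the bicanonical map $\phi_{|-2K_S|}$ presents $S$ as a double cover of a quadric cone $Q\subset\PP^3$ branched along a canonical sextic $\Gamma=Q\cap(\text{cubic})$. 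The ruling of $Q$ cuts out a $g^1_3$ on the genus-$4$ curve $\Gamma$, whose $12$ ramification points are exactly the given branch points (equivalently, the rulings tangent to $\Gamma$ are the rulings over the $12$ singular fibers).

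Once this correspondence is in place, the count becomes the enumeration of tritangent planes of the sextic space curve $\Gamma$. A plane meeting $\Gamma$ in a divisor $2(p_1+p_2+p_3)$ exhibits $p_1+p_2+p_3$ as an effective theta characteristic of degree $g-1=3$, so the tritangent planes are precisely the \emph{odd theta characteristics} of $\Gamma$. Every smooth curve of genus $g$ has exactly $2^{g-1}(2^g-1)$ of them, which for $g=4$ equals $2^3\cdot 15=120$, independently of the special position of $\Gamma$ on the cone. The remaining work is to show that these $120$ tritangent planes biject with the $\mathfrak{h}_4$ planar recoveries, and that for general $B$ they yield $120$ \emph{distinct}, reduced quartics; this gives $\mathfrak{h}_4=120$.

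The hard part will be establishing the geometric correspondence rigorously rather than the final numerology. Concretely, I expect three delicate steps: (a) constructing the del Pezzo surface $S$ (equivalently the trigonal genus-$4$ curve $\Gamma$) canonically from the $12$ branch points and showing it is well defined up to isomorphism; (b) proving that quartic recoveries are in bijection with the odd theta characteristics of $\Gamma$, in particular that each odd theta produces one of the planar covers and that distinct thetas give distinct $\mathcal{G}$-orbits; and (c) checking transversality so that exactly $120$ honest solutions occur for general $B$, with none collapsing onto the singular or non-reduced strata of Proposition~\ref{prop:baselocus}. As an independent confirmation one can verify the count numerically, by monodromy loops or homotopy continuation applied directly to the system~(\ref{eq:system4}).
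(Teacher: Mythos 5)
Your outline is essentially correct, but it takes a different route from the paper's proof. The paper works with the normal form $A = z^4 + A_2z^2 + A_3z + A_4$ and factors the branch map as $\pi = \mu\circ\nu$ through two weighted projective spaces, $\mathbb{P}(2^3,3^4,4^5) \dashrightarrow \mathbb{P}(3^5,2^7) \dashrightarrow \mathbb{P}^{12}$, where $\nu$ sends $(A_2,A_3,A_4)$ to the explicit pair $(U_2,U_3)$ and $\mu$ sends $(U_2,U_3)$ to $4U_2^3+27U_3^2$; the theorem then follows from Vakil's two statements that $\deg\nu = 120$ and that $\mu$ is birational onto $\mathcal{V}_4$, both of which the authors independently verify by homotopy continuation (with certification) and by Gr\"obner bases. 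Your route goes straight to the geometry behind Vakil's degree count: the fiber is identified with the tritangent planes of a trigonal genus-$4$ sextic on a quadric cone, i.e.\ with the $2^{g-1}(2^g-1) = 120$ odd theta characteristics. This is the same mathematics the paper deploys later (in the proof of Theorem~\ref{thm:realcount4planar}) to count real solutions, so your approach buys conceptual transparency --- it explains \emph{why} the answer is $120$ --- and it unifies Theorem~\ref{thm:120} with the real count. What it costs is that the two technical pillars remain exactly where the paper's do: your step (b) (tritangents biject with planar recoveries, via Recillas' trigonal construction) is Vakil's degree-$120$ statement for $\nu$, and your step (a) (the genus-$4$ curve is well defined up to isomorphism from $B$ alone) is precisely the birationality of $\mu$; if $\mu$ had degree $k>1$ the fiber would be $120k$, so (a) is not a formality but the second half of the theorem. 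You have correctly flagged both, but be aware that neither admits a one-line proof, and the paper's explicit factorization has the practical advantage of reducing them to checkable statements about concrete polynomial maps.
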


The hypothesis ensures that $B$ is a point on Vakil's degree $3762$ hypersurface
$\mathcal{V}_4$ in $\PP^{12}$. This is a necessary and sufficient
condition for the system (\ref{eq:system4}) to have any solution at all.

\begin{corollary}
If we prescribe $11$ general branch points on the line $\PP^1$ then the number
of complex quartics $A$ such that ${\rm discr}_z( A)$ 
vanishes at these points is equal to 
$120 \cdot 3762 = 451440$.
\end{corollary}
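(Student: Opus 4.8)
The plan is to reduce the count to a Bézout intersection on Vakil's hypersurface $\mathcal{V}_4$, followed by a fiberwise application of Theorem~\ref{thm:120}. Fix $11$ general points $p_1,\dots,p_{11}\in\PP^1$. A binary form $B=\sum_{i+j=12}\beta_{ij}x^iy^j$ vanishes at $p_k=(x_k:y_k)$ precisely when the linear equation $\sum_{i+j=12}\beta_{ij}x_k^iy_k^j=0$ holds in the $13$ coordinates $\beta_{ij}$. For distinct points these $11$ functionals are linearly independent (a Vandermonde argument), so the forms vanishing at all $p_k$ form a $2$-dimensional linear subspace of $\CC^{13}$, that is, a line $\ell\cong\PP^1\subset\PP^{12}$. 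Since ${\rm discr}_z(A)$ always lies in $\mathcal{V}_4$, the closure of the image of the map (\ref{eq:map1}), a quartic $A$ has ${\rm discr}_z(A)$ vanishing at $p_1,\dots,p_{11}$ exactly when ${\rm discr}_z(A)\in\ell\cap\mathcal{V}_4$.

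Next I would invoke Bézout's theorem. Because $\mathcal{V}_4$ is a hypersurface of degree $3762$ in $\PP^{12}$ and $\ell$ is a line not contained in it (the generic form on $\ell$ is not a discriminant, as $\mathcal{V}_4\neq\PP^{12}$), the intersection $\ell\cap\mathcal{V}_4$ consists of $3762$ points counted with multiplicity. For each point $B'\in\ell\cap\mathcal{V}_4$, Theorem~\ref{thm:120} produces exactly $\mathfrak{h}_4=120$ quartics in the normal form of Theorem~\ref{thm:normalform} whose discriminant is a nonzero multiple of $B'$. Distinct quartics have distinct discriminants as points of $\PP^{12}$, so solutions attached to different $B'$ are automatically distinct and there is no overcounting; multiplying the two counts yields $120\cdot 3762=451440$.

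The step requiring real care — and the main obstacle — is the transfer of genericity, since Theorem~\ref{thm:120} applies only at a \emph{general} point of $\mathcal{V}_4$. I must check that for a general configuration $P=(p_1,\dots,p_{11})$ every one of the $3762$ intersection points is such a general point and that the intersection is transverse, hence cut out in $3762$ distinct points. I would argue this by a dimension count on an incidence variety. Let $W\subsetneq\mathcal{V}_4$ be the union of ${\rm Sing}(\mathcal{V}_4)$ and the locus where the fiber of (\ref{eq:map1}) fails to have exactly $120$ normal-form points; then $\dim W\leq 10$ since $\dim\mathcal{V}_4=11$. Consider $I=\{(P,B'): B'\in W,\ B'(p_k)=0\ \text{for all }k\}\subseteq(\PP^1)^{11}\times W$. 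For fixed $B'$ each coordinate of $P$ must be one of the at most $12$ roots of $B'$, so the fiber of $I\to W$ is finite and $\dim I\leq\dim W\leq 10$; hence its image in $(\PP^1)^{11}$ has dimension at most $10<11$, and a general $P$ avoids it, forcing $\ell\cap\mathcal{V}_4\subset\mathcal{V}_4^{\mathrm{sm}}$ with a $120$-point fiber over each intersection point. That these special lines actually reach a dense part of $\mathcal{V}_4$ — so that ``general point of $\mathcal{V}_4$'' is the right target — follows by noting that a general $B'\in\mathcal{V}_4$ is a discriminant with $12$ distinct roots, and selecting any $11$ of them exhibits $B'$ as a point of some $\ell\cap\mathcal{V}_4$. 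Distinctness of the $3762$ points (transversality) I would establish by an entirely analogous count, bounding the dimension of the pairs $(P,B')$ for which $\ell_P$ is tangent to $\mathcal{V}_4^{\mathrm{sm}}$ at $B'$ and checking that their image in $(\PP^1)^{11}$ is again a proper subvariety. Combining transversality with Theorem~\ref{thm:120} then gives the asserted total $451440$.
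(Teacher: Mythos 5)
Your proposal is correct and follows essentially the same route as the paper: the $11$ vanishing conditions cut out a line in $\PP^{12}$, Bézout gives $3762$ intersection points with $\mathcal{V}_4$, and Theorem~\ref{thm:120} supplies $120$ normal-form quartics over each. The incidence-variety argument you add to justify that all $3762$ intersection points are general points of $\mathcal{V}_4$ (and that the intersection is transverse) is a genericity check the paper leaves implicit, but it does not change the approach.
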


\begin{proof}
Consider the space $\PP^{12}$ of binary forms of degree $12$.
Vanishing at $11$ general points defines a line in $\PP^{12}$.
That line meets the hypersurface $\mathcal{V}_4$ in $3762$ points.
By Theorem \ref{thm:120},
each of these points in $\mathcal{V}_4 \subset \PP^{12}$ has
precisely $120$ preimages $A$ in
$\PP^{14}$ under the map
(\ref{eq:map1}).
\end{proof}

\begin{remark} \label{rmk:extrafactor}
It was claimed in
\cite[equation (5.14)]{Ongaro} and \cite[page 608]{OS}
that $\mathfrak{h}_3$ is equal to
$120 \cdot (3^{10}-1)/2 =  3542880$.
That claim is not correct.
The factor $ (3^{10}-1)/2$ is not needed.
\end{remark}

\begin{proof}[Proof of Theorem \ref{thm:120}]
We work with the normal form (\ref{eq:othernf}).
Up to the  $\mathcal{G}$-action,  the  triples $(A_2,A_3,A_4)$  are parametrized by
   the $11$-dimensional weighted projective space   $ \mathbb{P}(2^3,3^4,4^5)$.
Following Vakil \cite{Ravi}, we consider a second weighted projective space of  dimension $11$, namely 
$\, \mathbb{P}(3^5, 2^7)$.
The weighted projective space $\mathbb{P}(3^5,2^7)$ parametrizes
pairs $(U_2,U_3)$ where
$U_i = U_i(x,y)$ is a binary form of degree $2i$, up to a
common rescaling of $x,y$ by some
 $\lambda \in \mathbb{C}^*$.

We define a rational map 
between our two weighted projective spaces as follows:
\begin{equation}
\label{eq:mapnu}
 \begin{matrix} \nu \,:\, \mathbb{P}(2^3,3^4,4^5)\, \dashrightarrow \,\mathbb{P}(3^5,2^7) 
\, , \,\, (A_2,A_3,A_4) \,\mapsto \, (U_2,U_3),  \qquad \qquad \smallskip \\
\qquad  {\rm where}  \quad 
 U_2 \,=\, -4A_4-\frac{1}{3}A_2^2 \quad {\rm and} \quad
 U_3 \,=\,  A_3^2-\frac{8}{3}A_2A_4 + \frac{2}{27}A_2^3. 
 \end{matrix} 
 \end{equation}
 We compose this with the following map into the space
 $\PP^{12} $ of binary forms of degree $12$:
 \begin{equation}
\label{eq:mapmu}
\mu \,:\,\mathbb{P}(3^5,2^7)  \, \dashrightarrow \, \PP^{12} \, , \,\,
(U_2,U_3) \, \mapsto \, 4\cdot U_2^3+27\cdot U_3^2.
\end{equation}
The raison d'\^{e}tre for the maps
(\ref{eq:mapnu}) and (\ref{eq:mapmu}) is that they represent
the formula of the discriminant ${\rm discr}_z(A)$ of
the special quartic in (\ref{eq:othernf}). Thus,
modulo the action of $\mathcal{G}$, we have $$ \pi  \,\,= \,\,\mu \,\circ\, \nu , $$ where $\pi: \PP^{14} \rightarrow \PP^{12}$ is
the branch locus map in (\ref{eq:map1}). One checks this by a direct computation.

Vakil proves in \cite[Proposition 3.1]{Ravi} that the map $\nu$ is dominant and its degree equals $120$. We also 
verified this statement independently via a numerical calculation in affine coordinates using \texttt{HomotopyContinuation.jl} \cite{BT},
and we certified its correctness using the method in \cite{BRT}.
This implies that the image of the map $\mu$ equals
the hypersurface $\mathcal{V}_4$. In particular,
$\mathcal{V}_4$ is the locus of all binary forms of degree
$12$ that are sums of the cube of a quartic and the square of a sextic.
Vakil proves in \cite[Theorem 6.1]{Ravi}  that the map 
$\mu$ is birational onto its image $\mathcal{V}_4$.
We verified this  statement by a Gr\"obner basis calculation.
This result
 implies that both $\nu$ and $\pi$ are maps of degree $120$, as desired.
\end{proof}

\begin{remark}
We also verified that $\mathcal{V}_4$ has  degree $3762$, namely by solving $12$ 
random affine-linear equations on the parametrization
(\ref{eq:mapmu}). The common Newton polytope of the
resulting polynomials has normalized volume $31104$.
This is the number of paths tracked by the polyhedral homotopy
in \texttt{HomotopyContinuation.jl}. We found $22572 = 3762 \times 6$ complex solutions.
The factor $6$ arises because $U_2$ and $U_3$ can be multiplied by roots of unity.
\end{remark}

\begin{algo} \label{algo:recovery4}
We implemented a numerical recovery method based
on the argument used to prove Theorem \ref{thm:120}.
The \underbar{input} is a pair $(U_2,U_3)$ as above.
The \underbar{output} consists of the
$120$ solutions in the subspace $L_4 \simeq \PP^{11}$ seen in (\ref{eq:Ld}).
We find these by solving the equations
\begin{equation}
\label{eq:raviU}
A_1 A_3-4 A_0 A_4- \frac{1}{3} A_2^2\, = \,U_2 \quad {\rm and} \quad
A_1^2 A_4 + A_0 A_3^2 - \frac{8}{3} A_0A_2A_4 -\frac{1}{3} A_1A_2A_3+\frac{2}{27}A_2^3\, =\, U_3.
 \end{equation}
 By \cite[Equation (5)]{Ravi}, these
   represent the discriminant 
for quartics $A =\sum_{i=0}^4 A_i z^{4-i}$.
To be precise, (\ref{eq:raviU}) is a system of
$12= 5+7$ equations in the $12 $ unknown coefficients of $A \in L_4$.
These have $120$ complex solutions, found easily with
\texttt{HomotopyContinuation.jl} \cite{BT}.
\end{algo}

\section{Hurwitz Combinatorics}
\label{sec3}

The enumeration of Riemann surfaces satisfying fixed ramification
was initiated by Hurwitz in his 1891 article \cite{Hurwitz}.
Hurwitz numbers are a widely studied subject, 
seen as central to combinatorial algebraic geometry.
For basics see \cite{CJM, CavalieriMiles, GMR, IZ, Ongaro}
and the references therein.

This paper concerns a general projection
$V(A)\rightarrow \mathbb{P}^1$ of a smooth plane curve of degree $d$ and genus 
$g=\binom{d-1}{2}$. In Section \ref{sec2} we studied the
inverse problem of recovering $A$ from the $d(d-1)$ simple
 branch points.
We now relate the
plane Hurwitz numbers $\mathfrak{h}_d$ to the
   Hurwitz numbers $H_d$ that count abstract covers.
   To be precise, $H_d$ is the number of degree $d$ covers $f$ of $\mathbb{P}^1$ by a genus $\binom{d-1}{2}$ curve $C$ having $d(d-1)$ fixed simple branch points. Each cover $f:C\rightarrow \mathbb{P}^1$ is weighted by $\frac{1}{|\Aut(f)|}$.
   Following \cite{CavalieriMiles},
the number $H_d$  can be found by counting monodromy representations, i.e.\ homomorphisms from the fundamental group of the target minus the branch points to the symmetric group over the fiber of the base point.

\begin{lemma}[Hurwitz \cite{Hurwitz}] \label{lem:abstract_hurwitz_number}
The Hurwitz number $H_d$ equals $1/d!$ times the number of tuples of transpositions $\tau = (\tau_1,\tau_2,\ldots,\tau_{d\cdot (d-1)})$ in 
the symmetric group $\mathbb{S}_d$  satisfying 
$$\tau_{d\cdot (d-1)}\circ\dots \circ \tau_2 \circ\tau_1 = \mathrm{id},$$ where the subgroup generated by the $\tau_i$ acts transitively on the set $\{1,2,\dots,d\}$. 
\end{lemma}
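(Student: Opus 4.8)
The plan is to establish the classical correspondence between degree $d$ covers of $\PP^1$ with fixed simple branch points and monodromy data, then carefully account for the weighting by automorphisms. First I would fix the $d(d-1)$ branch points $q_1,\dots,q_{d(d-1)} \in \PP^1$ and a base point $q_0$ distinct from all of them. The fundamental group $\pi_1(\PP^1 \setminus \{q_1,\dots,q_{d(d-1)}\}, q_0)$ is generated by loops $\gamma_1,\dots,\gamma_{d(d-1)}$, each encircling one branch point, subject to the single relation $\gamma_{d(d-1)} \cdots \gamma_1 = \mathrm{id}$ (the product of all loops is contractible on the sphere). A degree $d$ cover $f: C \to \PP^1$ unramified over $q_0$ determines, after labeling the $d$ points of the fiber $f^{-1}(q_0)$ by $\{1,\dots,d\}$, a monodromy homomorphism $\rho: \pi_1 \to \mathbb{S}_d$ sending each $\gamma_i$ to a permutation $\tau_i$.

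The key steps are then as follows. Because each branch point is a \emph{simple} branch point, the local monodromy $\tau_i$ must be a transposition; this is exactly the statement that the cover has a single simple ramification point over $q_i$. The relation in $\pi_1$ forces $\tau_{d(d-1)} \circ \cdots \circ \tau_1 = \mathrm{id}$. Connectedness of the covering curve $C$ translates precisely into transitivity of the subgroup $\langle \tau_1,\dots,\tau_{d(d-1)}\rangle$ on $\{1,\dots,d\}$. Conversely, the Riemann existence theorem guarantees that every such tuple of transpositions with trivial product and transitive action arises from a genuine connected cover, and that this cover is unique up to isomorphism once the labeling is fixed. Thus tuples satisfying the stated conditions are in bijection with labeled covers. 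I would also verify the genus is correct: by Riemann--Hurwitz, $2g - 2 = d(-2) + \sum_i (\text{ramification}) = -2d + d(d-1)$, giving $g = \binom{d-1}{2}$ as required, so the ramification profile is consistent.

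The final step, and the main subtlety, is converting the count of \emph{labeled} covers into the weighted count $H_d = \sum_f \frac{1}{|\Aut(f)|}$. The group $\mathbb{S}_d$ acts on the set of tuples by simultaneous conjugation, $\tau_i \mapsto \sigma \tau_i \sigma^{-1}$, corresponding to relabeling the fiber $f^{-1}(q_0)$. Two tuples give isomorphic covers if and only if they lie in the same $\mathbb{S}_d$-orbit. By the orbit-stabilizer theorem, the stabilizer of a tuple under this conjugation action is exactly $\Aut(f)$, so the orbit of a tuple for the cover $f$ has size $d!/|\Aut(f)|$. Summing over orbits, the total number of tuples equals $\sum_f d!/|\Aut(f)| = d! \cdot H_d$, which rearranges to the claimed formula $H_d = \frac{1}{d!} \cdot \#\{\text{tuples}\}$. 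The hard part will be stating the automorphism-stabilizer identification cleanly, since it is precisely this weighting convention that makes the naive labeled count divide evenly by $d!$; I would emphasize that an automorphism of the cover is a deck transformation, which permutes each fiber compatibly and hence commutes with the entire monodromy action, yielding the stabilizer description.
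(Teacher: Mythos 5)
Your proof is correct and is essentially the standard monodromy/Riemann-existence argument that the paper itself invokes (it cites Hurwitz and Cavalieri--Miles rather than proving the lemma): simple branch points give transpositions, the sphere relation gives the trivial product, connectedness gives transitivity, and the orbit--stabilizer count with $\Aut(f)$ identified as the centralizer of the monodromy image converts labeled tuples into the weighted count. The identification of the stabilizer of a tuple under simultaneous conjugation with the deck group is handled correctly, which is indeed the one point where care is needed.
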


\begin{proposition}\label{prop:abstract_plane_numbers_relation}
For $d \geq 3$, the plane Hurwitz number is less than or equal to the 
classical Hurwitz number that counts abstract covers.
In symbols, we have $\,\mathfrak{h}_d \,\leq \,H_d$.
\end{proposition}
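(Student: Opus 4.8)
The plan is to build an injective map from the set of $\mathcal{G}$-orbits of planar solutions into the set of abstract covers weighted by $H_d$, and then to account for the automorphism weighting in the definition of $H_d$. First I would send a solution $A$ to the branched cover $f_A = \pi|_{V(A)} \colon V(A) \to \PP^1$. For general $B$ the solution curve $V(A)$ is smooth and irreducible, so $f_A$ is a degree-$d$ cover of $\PP^1$ by a curve of genus $\binom{d-1}{2}$; its branch points are exactly the zeros of $B = \mathrm{discr}_z(A)$, and they are simple because $B$ has distinct roots. Thus $f_A$ is one of the covers enumerated in Lemma \ref{lem:abstract_hurwitz_number}, and its monodromy yields a transitive tuple of transpositions $\tau$ with $\tau_{d(d-1)}\circ\cdots\circ\tau_1=\mathrm{id}$, well defined up to simultaneous conjugation in $\mathbb{S}_d$. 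Since every $g \in \mathcal{G}$ preserves the projection $\pi$ — it scales $x,y$ together and hence acts as the identity on the target $\PP^1$ — it restricts to an isomorphism $V(A) \xrightarrow{\sim} V(gA)$ commuting with the projection. Therefore $f_A$ and $f_{gA}$ are isomorphic covers, and $\Phi \colon A \mapsto [f_A]$ descends to a map on $\mathcal{G}$-orbits.

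The heart of the argument is the injectivity of $\Phi$. Suppose two solutions $A, A'$ give isomorphic covers, via $\phi \colon V(A)\xrightarrow{\sim} V(A')$ with $f_{A'}\circ\phi = f_A$. Because the projection from $p$ is cut out by the pencil $\langle x,y\rangle$, one has $f_A^\ast \OO_{\PP^1}(1) = \OO_{V(A)}(1)$, and for a smooth plane curve of degree $d \geq 3$ the restriction sequence $0 \to \OO_{\PP^2}(1-d)\to\OO_{\PP^2}(1)\to\OO_{V(A)}(1)\to 0$ gives $h^0(\OO_{V(A)}(1)) = 3$. Hence the complete linear system $H^0(\OO_{V(A)}(1))$ is intrinsic to the abstract curve, it recovers the plane $\PP^2 = \PP(H^0(\OO_{V(A)}(1))^\ast)$, and the pencil defining the cover recovers the projection. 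Since $\phi$ commutes with the two projections, $\phi^\ast$ identifies $H^0(\OO_{V(A')}(1))$ with $H^0(\OO_{V(A)}(1))$, carrying the pencil $\langle x,y\rangle$ to itself and inducing the identity on $\PP^1$. The linear automorphisms of $\PP^2$ with exactly this property form the group $\mathcal{G}$ of (\ref{eq:groupG}). Thus $A'$ lies in $\mathcal{G}\cdot A$, proving injectivity, and so $\mathfrak{h}_d$ equals the number of isomorphism classes of covers in the image of $\Phi$.

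Finally I would control the automorphism weights. A deck transformation of $f_A$ is an element of $\mathbb{S}_d$ centralizing the monodromy group; for the general projection of a smooth plane curve this group is the full symmetric group $\mathbb{S}_d$ (the classical uniform-monodromy statement for general projections), whose centralizer is trivial for $d \geq 3$. Hence every cover in the image of $\Phi$ satisfies $|\Aut(f)| = 1$ and contributes exactly $1 = 1/|\Aut(f)|$ to $H_d$. Combining this with injectivity gives
$$\mathfrak{h}_d \,=\, \#\{\,[f] \in \mathrm{image}(\Phi)\,\} \,=\, \sum_{[f]\in\mathrm{image}(\Phi)} \frac{1}{|\Aut(f)|} \ \leq\ \sum_{\text{all }[f]} \frac{1}{|\Aut(f)|} \,=\, H_d.$$
The main obstacle is the injectivity step: one must justify that the abstract cover remembers the plane curve up to $\mathcal{G}$, which rests on the equality $f_A^\ast\OO_{\PP^1}(1)=\OO_{V(A)}(1)$, on $h^0(\OO_{V(A)}(1))=3$, and on identifying the relevant stabilizer in $\mathrm{PGL}(3)$ with $\mathcal{G}$. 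A secondary point to pin down is that for general $B$ every solution curve is smooth with full symmetric monodromy, so that the automorphism count above is valid; note that for $d=3$ all relevant covers are planar and automorphism-free, which is consistent with the equality $\mathfrak{h}_3 = H_3 = 40$.
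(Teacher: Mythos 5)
Your proof is correct, and it follows the same basic route as the paper: send a planar solution to its abstract cover, observe that such covers have trivial automorphism group for $d\geq 3$, and conclude the inequality. The paper's own proof is only the two-sentence remark following the proposition, which addresses the automorphism issue (contrasting with $H_2=1/2$) but leaves the injectivity of the orbit-to-cover map entirely implicit. That injectivity is genuinely needed for the inequality --- without it, several $\mathcal{G}$-orbits could collapse onto one abstract cover and the count could go the wrong way --- and your argument supplies it cleanly: $f_A^*\OO_{\PP^1}(1)=\OO_{V(A)}(1)$ with $h^0=3$ recovers the plane embedding as the complete linear system, and the stabilizer of the pencil $\langle x,y\rangle$ acting trivially on the target $\PP^1$ is exactly $\mathcal{G}$ from (\ref{eq:groupG}). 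Two minor simplifications are available: you do not need the classical uniform-monodromy theorem for general projections, since a transitive subgroup of $\mathbb{S}_d$ generated by transpositions (which the monodromy is, as all branch points are simple and the curve is irreducible) is automatically all of $\mathbb{S}_d$, whence the centralizer, i.e.\ the deck group, is trivial for $d\geq 3$; and strictly speaking injectivity alone already gives $\mathfrak{h}_d\leq\#\{\text{covers}\}\leq H_d$ once every weight $1/|\Aut(f)|$ in the image equals $1$, which is exactly how you conclude. In short, your write-up is a more complete version of the paper's argument rather than a different one.
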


The restriction $d \geq 3$ is needed because of the weighted count, with automorphisms. For $d=2$, we have $H_2= 1/2$ because of the existence of a non-trivial automorphism
for maps $\PP^1 \rightarrow \PP^1$.
For higher $d$, the covers coming from projections of plane curves do not have automorphisms, so we can count them without this weight. This 
establishes Proposition \ref{prop:abstract_plane_numbers_relation}.

The two cases of primary interest in this paper are
$d=3$ and $d=4$. From the proofs of
Corollary \ref{cor:deg3} and  Theorem  \ref{thm:120},
we infer that the two cases exhibit rather different behaviors.

\begin{corollary} \label{cor:7528620}
For linear projections of cubic curves and quartic curves in $\PP^2$, we have
$$ \qquad \qquad \mathfrak{h}_3 \, = \, H_3 \, = \, 40  \qquad {\rm and} \qquad
\mathfrak{h}_4 \, = \, 120 \, \, < \,\,H_4 \,= \, 7528620.
$$
\end{corollary}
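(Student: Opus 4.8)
The plan is to observe that the two plane Hurwitz numbers are already established---Corollary~\ref{cor:deg3} gives $\mathfrak{h}_3 = 40$ and Theorem~\ref{thm:120} gives $\mathfrak{h}_4 = 120$---so the entire content of the corollary is the evaluation of the abstract Hurwitz numbers $H_3$ and $H_4$ through the transposition count of Lemma~\ref{lem:abstract_hurwitz_number}. Since Proposition~\ref{prop:abstract_plane_numbers_relation} already supplies the inequality $\mathfrak{h}_d \leq H_d$, once I show $H_3 = 40$ the equality $\mathfrak{h}_3 = H_3$ is forced, and once I show $H_4 = 7528620$ the strict inequality $\mathfrak{h}_4 < H_4$ drops out.

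First I would count, with transitivity temporarily ignored, the ordered tuples $(\tau_1,\dots,\tau_{d(d-1)})$ of transpositions in $\mathbb{S}_d$ whose product is the identity. This is a standard application of the Frobenius formula: the number of such tuples equals
$$ \frac{|C|^{\,d(d-1)}}{d!}\sum_{\chi\in\mathrm{Irr}(\mathbb{S}_d)}\frac{\chi(C)^{\,d(d-1)}}{\chi(1)^{\,d(d-1)-2}}, $$
where $C$ is the class of transpositions. For $d=3$ the three irreducible characters take the values $1,1,0$ on $C$, yielding $243$ tuples (equivalently, $\tau_6$ is forced by $\tau_1,\dots,\tau_5$ and is automatically a transposition, giving $3^5$). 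For $d=4$ the five irreducibles have dimensions $1,1,2,3,3$ and take the values $1,-1,0,1,-1$ on $C$, and the formula evaluates to $181401600$.

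The delicate step is enforcing the transitivity required by Lemma~\ref{lem:abstract_hurwitz_number}. I would record the support graph on $\{1,\dots,d\}$ whose edges are the transpositions actually occurring in the tuple; the generated subgroup is transitive precisely when this graph is connected and spans all $d$ vertices. For $d=3$ the only intransitive tuples are the three constant ones, so $H_3 = (243-3)/6 = 40 = \mathfrak{h}_3$. For $d=4$ I would run M\"obius inversion on the partition lattice $\Pi_4$: if $a(\pi)$ denotes the number of product-identity tuples all of whose transpositions stay inside blocks of $\pi$, then the transitive count is $\sum_{\pi}\mu(\pi,\hat 1)\,a(\pi)$ with $\mu(\pi,\hat 1) = (-1)^{k-1}(k-1)!$ for a $k$-block partition. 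The relevant values are $a = 181401600$ for the one-block partition, $a = 3^{11}$ for each $3{+}1$ partition (again by the Frobenius formula, now on $\mathbb{S}_3$), $a = 2^{11}$ for each $2{+}2$ partition (a parity count on $\{(12),(34)\}$), and $a = 1$ for each $2{+}1{+}1$ partition.

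Assembling the four partition types (with $4$, $3$, and $6$ partitions of the last three shapes) gives the transitive tuple count
$$ 181401600 - 4\cdot 3^{11} - 3\cdot 2^{11} + 12 \;=\; 180686880, $$
so that $H_4 = 180686880/24 = 7528620$ and therefore $\mathfrak{h}_4 = 120 < 7528620 = H_4$. The main obstacle is exactly this transitivity bookkeeping for $d=4$: an error in any single $a(\pi)$, in the count of partitions of a given shape, or in a M\"obius coefficient propagates directly into $H_4$, so I would cross-check the final count $180686880$ against a brute-force enumeration (e.g.\ in \texttt{GAP}, or via the homotopy toolkit used elsewhere in the paper) before reporting it.
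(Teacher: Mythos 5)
Your proposal is correct, and the arithmetic checks out: the Frobenius formula gives $6^{12}/24\cdot\bigl(2+2/3^{10}\bigr)=181401600$ product-identity tuples, the M\"obius inversion over $\Pi_4$ with $a$-values $3^{11}$, $2^{11}$, $1$ on the $3{+}1$, $2{+}2$, $2{+}1{+}1$ partitions (with multiplicities $4$, $3$, $6$ and coefficients $-1$, $-1$, $+2$) yields $180686880$ transitive tuples, and division by $4!$ gives $7528620$. Where you differ from the paper is in how $H_4$ is evaluated: the paper never proves this corollary by formula, but instead obtains $7528620$ as the length of an exhaustively enumerated list of canonical representatives of monodromy representations, computed in \textsc{Oscar} over about $6.5$ hours (this enumeration is described in the proof of Theorem~\ref{thm:realcount4}, where the explicit list is then reused to extract the real Hurwitz numbers via Cadoret's criterion). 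Your character-theoretic count with transitivity handled by M\"obius inversion on the partition lattice is a genuine alternative: it is humanly verifiable, independent of software, and confirms the enumeration; what it does not provide is the explicit table of $7528620$ tuples that the paper needs downstream. For $d=3$ your count ($3^5=243$ tuples, minus the $3$ intransitive constant ones, divided by $|\mathbb{S}_3|=6$) coincides with the one already in the proof of Corollary~\ref{cor:deg3}, and your use of Corollary~\ref{cor:deg3}, Theorem~\ref{thm:120} and Proposition~\ref{prop:abstract_plane_numbers_relation} to assemble the final statement matches the paper's intent. One small remark: since Corollary~\ref{cor:deg3} itself derives $\mathfrak{h}_3=40$ from the identification of plane covers with abstract covers, your deduction of $\mathfrak{h}_3=H_3$ from the numerical coincidence $40=40$ is logically sound but slightly understates that the equality holds for the structural reason (every abstract cover of $\PP^1$ by a genus-one curve is realized by a plane cubic) rather than by accident.
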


The count in Lemma \ref{lem:abstract_hurwitz_number} can be realized by combinatorial objects 
known as {\em monodromy graphs}. These occur in different guises
in the literature. We here use the version
that is defined formally in \cite[Definition 3.1]{GMR}.
These represent abstract covers in the tropical setting of 
balanced metric graphs.
We next list all
monodromy graphs for $d=3$.

\begin{example}[Forty monodromy graphs]
For $d=3$,  Lemma \ref{lem:abstract_hurwitz_number}
yields $H_3 = 40$ six-tuples
$\tau = (\tau_1,\tau_2,\ldots,\tau_6)$ of permutations of $\{1,2,3\}$,
up to the conjugation action by $\mathbb{S}_3$.
In Table~\ref{table:40covers} we list representatives for 
these $40$ orbits (see also \cite[Table 1]{Ongaro2}). Each tuple $\tau$ determines a
monodromy graph as in \cite[Lemma 4.2]{CJM} and \cite[Section 3.3]{GMR}.
Reading from the left to right, the diagram represents the
cycle decompositions of the permutations $\tau_i \circ \cdots \circ \tau_1$
for $i=1,\ldots,6$. For instance, for the first type $\mathcal{A}_1$,
we start at ${\rm id} = (1)(2)(3)$, then pass to $(12)(3)$, next to $(123)$, then to $(12)(3)$, etc.
On the right end, we are back at ${\rm id} = (1)(2)(3)$.

\begin{longtable}[H]{| c | c | c | c | c | c |c|}
\hline
$\!\!$ \textbf{Type}$\!$ &\textbf{Real?}$\!$ &  \textbf{Six-Tuple} $\tau$ & \textbf{Monodromy Graph} & \!\textbf{Clebsch}\! & 
$\!\mathbb{P}^3(\mathbb{F}_3)\!$
\\ \hline \hline
\makecell{$\mathcal{A}_1$ \\ $\mathcal{A}_2$  } & \makecell{\checkmark $ (12)$
\\ \checkmark $ (12)$ }  
&\makecell{ $(12)(13)(13)(13)(13)(12)$ \\ 
$ (12)(13)(13)(23)(23)(12)$}& \Lazypic{5cm}{
\includegraphics{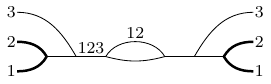}
 }
 & \makecell{ $ 123 $ \\ $ 1a $} 
 & \makecell{ $0010$ \\ $0100$}
  \\ \hline

\makecell{$\mathcal{A}_3$ \\$\mathcal{A}_4$ \\ $\mathcal{A}_{11}$ \\$\mathcal{A}_{12}$  }&\makecell{ \xmark \\ \xmark \\ \xmark \\ \xmark} & \makecell{ $(12)(13)(13)(13)(23)(13)$\\ $(12)(13)(13)(13)(12)(23)$ \\$(12)(13)(13)(23)(12)(13)$\\$(12)(13)(13)(23)(13)(23)$} & \Lazypic{5cm}{\includegraphics{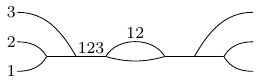}
 }  & \makecell{ $ 348 $ \\ $357$ \\ $7b$ \\ $4c$ } & \makecell{$ 1022 $ \\ $1012$ \\$1102$ \\ $1201$} \\ \hline

\makecell{$\mathcal{A}_5$ \\ $\mathcal{A}_6$\\ $\mathcal{A}_7$ \\$\mathcal{A}_{13}$ \\$\mathcal{A}_{14}$\\ $\mathcal{A}_{15}$}& \makecell{\xmark \\ \xmark \\ \xmark\\  \xmark \\ \xmark\\\xmark}& \makecell{ $(12)(13)(23)(23)(13)(12)$\\ $(12)(13)(23)(23)(23)(13)$\\ $(12)(13)(23)(23)(12)(23)$\\ $(12)(13)(23)(12)(23)(12)$\\ $(12)(13)(23)(12)(12)(13)$ \\$(12)(13)(23)(12)(13)(23)$ }& \Lazypic{5cm}{\includegraphics{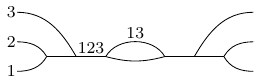}
 }& \makecell{ $456$ \\ $267$ \\ $ 168 $ \\ $1b$ \\ $7c$ \\ $4a$ } &\makecell{$1020$ \\ $1011$ \\ $0012$ \\ $1100$ \\ $1201$ \\ $0101$}\\ \hline

\makecell{$\mathcal{A}_8$ \\ $\mathcal{A}_9$ \\ $\mathcal{A}_{10}$\\$\mathcal{A}_{16}$ \\ $\mathcal{A}_{17}$ \\ $\mathcal{A}_{18}$ }&\makecell{ \xmark \\ \xmark \\ \xmark\\\xmark \\ \xmark \\ \xmark}&\makecell{ $(12)(13)(12)(12)(13)(12)$\\  $(12)(13)(12)(12)(23)(13)$ \\  $ (12)(13)(12)(12)(12)(23)$\\$(12)(13)(12)(13)(23)(12)$ \\$(12)(13)(12)(13)(12)(13)$\\$(12)(13)(12)(13)(13)(23)$  }& \Lazypic{5cm}{\includegraphics{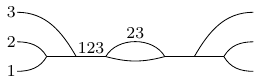}} & \makecell{ $789$ \\ $ 159 $ \\ $249$ \\ $1c$ \\ $7a$ \\ $4b$ } & \makecell{$1010$ \\ $0010$ \\ $1021$ \\ $1200$ \\ $0102$ \\ $1101$} \\ \hline

\makecell{$\mathcal{B}_1$ \\$\mathcal{B}_2$ } & 
\makecell{\checkmark (id) \\ \checkmark (id) }
 & \makecell{$(12)(12)(13)(13)(12)(12)$ \\  $(12)(12)(13)(13)(23)(23)$ } &  \Lazypic{5cm}{\includegraphics{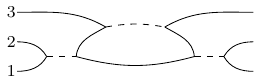}
}& \makecell{ base \\ $147$ }
& \makecell{ $1000$ \\ $0001 $} \\  \hline \hline

\makecell{$\mathcal{C}^{\ell}_1$ \\ $\mathcal{C}^{\ell}_2$ \\ $\mathcal{C}^{\ell}_3$ }& \makecell{\checkmark $(12)$ \\\xmark \\\xmark  }  & \makecell{$(12)(12)(12)(13)(13)(12)$ \\$(12)(12)(12)(13)(23)(13)$  \\  $(12)(12)(12)(13)(12)(23)$ } & \Lazypic{5cm}{ \includegraphics{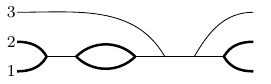}
}  & \makecell{$2a$ \\ $8b$ \\ $5c$} &  \makecell{$0110$ \\ $1112$  \\ $1222$} \\
\hline 

\makecell{$\mathcal{C}^{r}_1$ \\$\mathcal{C}^{r}_2$ \\ $\mathcal{C}^{r}_3$ }&  \makecell{\checkmark $(12)$ \\ \xmark \\ \xmark}  & \makecell{$(12)(13)(13)(12)(12)(12)$ \\$(12)(13)(23)(13)(13)(13)$ \\  $(12)(13)(12)(23)(23)(23)$ }& \Lazypic{5cm}{\includegraphics{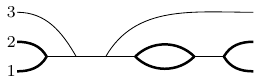}
 } & \makecell{$3a$ \\ $6b$ \\ $9c$} & 
 \makecell{$0120$ \\  $1121$ \\ $1211$} \\
\hline

\makecell{$\mathcal{D}^{\ell}_1$} &  \makecell{\checkmark  (id)  }  & $(12)(12)(12)(12)(13)(13)$&  \Lazypic{5cm}{\includegraphics{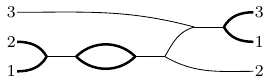}
 } & $369$ & $1002$\\ \hline\hline

\makecell{$\mathcal{D}^{r}_1$} & \makecell{\checkmark (id) } & $(12)(12)(13)(13)(13)(13)$ & \Lazypic{5cm}{\includegraphics{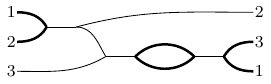} } & $258$ & $1001$ \\ \hline\hline

\makecell{$\mathcal{E}^{\ell}_1 $ \\ $\mathcal{E}^{\ell}_3 $ \\ $\mathcal{E}^{\ell}_5 $} &\makecell{ \xmark \\ \xmark \\ \xmark} & \makecell{$(12)(12)(13)(23)(13)(12)$\\$(12)(12)(13)(23)(23)(13)$ \\$ (12)(12)(13)(23)(12)(23)$ } & \Lazypic{5cm}{\includegraphics{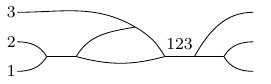}
 } & \makecell{$2b$ \\ $8c$ \\ $5a$} & \makecell{$1110$ \\ $1221$ \\ $0111$  }\\ \hline

\makecell{$\mathcal{E}^{\ell}_2 $ \\  $\mathcal{E}^{\ell}_4 $\\ $\mathcal{E}^{\ell}_6 $ }& \makecell{\xmark \\ \xmark \\ \xmark} & \makecell{$(12)(12)(13)(12)(23)(12)$\\$(12)(12)(13)(12)(12)(13)$\\$(12)(12)(13)(12)(13)(23)$ }& \Lazypic{5cm}{
\includegraphics{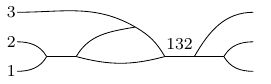}
 } & \makecell{$2c$ \\ $5b$ \\ $8a$ } & \makecell{$1220$ \\$1111$\\ $0112$} \\ \hline\hline

\makecell{$\mathcal{E}^{r}_1$\\ $\mathcal{E}^{r}_3$\\$\mathcal{E}^{r}_5$ }&\makecell{\xmark \\ \xmark \\ \xmark }&\makecell{$(12)(13)(23)(13)(12)(12)$\\$ (12)(13)(13)(12)(13)(13)$  \\$(12)(13)(13)(12)(23)(23) $  }&  \Lazypic{5cm}{ 
\includegraphics{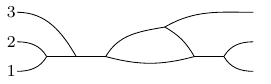}
} & \makecell{$3c$ \\ $6c$ \\ $9b$} & \makecell{$1210$ \\$1212$  \\ $1122$ } \\ \hline

\makecell{$\mathcal{E}^{r}_2$ \\ $\mathcal{E}^{r}_4$  \\$\mathcal{E}^{r}_6$  }&\makecell{ \xmark \\ \xmark \\ \xmark}& \makecell{$(12)(13)(12)(23)(12)(12)$ \\$(12)(13)(12)(23)(13)(13)$ \\$(12)(13)(23)(13)(23)(23)$}&  \Lazypic{5cm}{ 
\includegraphics{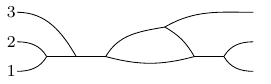}
} & \makecell{$3b$ \\ $6a$\\$9a$} & \makecell{$1120$ \\$0121$ \\ $0122$ } \\ \hline
\caption{The monodromy graphs for the $H_3=40$ coverings of $\PP^1$ by a genus one curve.
Eight of the $40$ coverings are real, and the certifying edge coloring is 
 shown in the graph.
 The two rightmost columns, labeled {\bf Clebsch} and $\,\PP^3(\mathbb{F}_3)$,
 will be explained in Section \ref{sec4}.
 }\label{table:40covers}
\end{longtable}

To identify real monodromy representations (see Lemma \ref{lem:real_abstract_hurwitz_numbers}), we give a coloring as in \cite[Definition 3.5]{GMR}. Using \cite[Lemma 3.5]{GMR} we find eight real covers among the $40$ complex covers. We use \cite[Lemma 2.3]{GMR} to associate the real covers to their monodromy representations. 
  
  We divide the $40$ classes into five types, $\mathcal{A}$ to $\mathcal{E}$, depending on the combinatorial type of the graph. Types $\mathcal{A}$ and $\mathcal{B}$ are symmetric under reflection of the ends, $\mathcal{C}$, 
  $\mathcal{D}$ and $\mathcal{E}$ are not. 
  An upper index $\ell$ indicates that the cycle of the graph is on the left side of the graph,
  while $r$ indicates that it is on the right side. The number of classes of each type is the multiplicity 
  in \cite[Lemma 4.2]{CJM}
and  \cite[Table 1]{Ongaro2}.
 Each class starts with the real types, if there are any, and proceeds lexicographically in $\tau$.
In the table, the edges of the monodromy graphs are labeled by the cycle they represent. If the edge is unlabeled,
then the corresponding cycle is either clear from context or varies through all possible cycles in $\mathbb{S}_3$ of appropriate length.
\hfill $ \diamond$ \end{example}

We now turn to  branched covers that are real.
In the abstract setting of Hurwitz numbers $H_d$, this has been
studied in \cite{Cadoret, GMR, IZ}.  A cover $f : C \rightarrow \PP^1$ is 
called {\em real} if the Riemann surface $C$ has an involution which is compatible with
complex conjugation on the Riemann sphere  $\PP^1$.
The branch points in $\PP^1$ can be real
or pairs of complex conjugate points.
We let $H^{\real}_d(r)$ be the weighted count of degree $d$ real covers $f$ of $\mathbb{P}^1$ by a genus $\binom{d-1}{2}$ curve $C$ having $d(d-1)$ fixed simple branch points, of which $r$ are real. As before, each cover $f:C\rightarrow \mathbb{P}^1$ is weighted by $\frac{1}{|\Aut(f)|}$.
The following result appears in \cite[Section 3.3]{Cadoret}.

\begin{lemma} \label{lem:real_abstract_hurwitz_numbers}
The real Hurwitz number $H^\real_d(r)$ equals $1/d!$ times the number of tuples $\tau$
as in Lemma \ref{lem:abstract_hurwitz_number} for which there exists an involution $\sigma \in \mathbb{S}_3$ such that $$\sigma\circ \tau_i\circ\dots\circ\tau_1\circ\sigma =  (\tau_1\circ\dots\circ\tau_i)^{-1}$$ for $i=1,\dots,r-1$ and $\sigma\circ\tau_{r+i}\circ\sigma=\tau_{r'+1-i}$ for $i = 1,\dots,r'$, where $r$ is the number of real branch points and $r'$ the number of pairs of complex conjugate branch points.
\end{lemma}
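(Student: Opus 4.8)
The plan is to translate the existence of a real structure on a cover $f:C\to\PP^1$ into a symmetry condition on its monodromy representation, and then to count such representations exactly as in Lemma~\ref{lem:abstract_hurwitz_number}. First I fix a configuration of the $d(d-1)$ branch points that is invariant under complex conjugation $c:z\mapsto\bar z$ on $\PP^1$: I place the $r$ real branch points on the real axis, the $r'$ conjugate pairs symmetrically across it, and I choose a real base point $x_0$. A real cover is an anti-holomorphic involution $\kappa:C\to C$ with $f\circ\kappa=c\circ f$. Since a lift of an anti-holomorphic map through a holomorphic covering is automatically anti-holomorphic, producing $\kappa$ reduces to a purely topological lifting problem for the orientation-reversing homeomorphism $c$ of $\PP^1\setminus\Delta$, where $\Delta$ is the branch locus.

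Degree-$d$ covers of $\PP^1\setminus\Delta$ with a labelled fibre over $x_0$ correspond to homomorphisms $\rho:\pi_1(\PP^1\setminus\Delta,x_0)\to\mathbb{S}_d$, $\gamma_i\mapsto\tau_i$, up to conjugacy. Because $c$ fixes $x_0$, it induces an automorphism $c_*$ of $\pi_1(\PP^1\setminus\Delta,x_0)$, and the standard lifting criterion states that $\kappa$ exists if and only if $\rho\circ c_*$ is conjugate to $\rho$, i.e.\ there is $\sigma\in\mathbb{S}_d$ with $\sigma\,\rho(c_*\gamma)\,\sigma^{-1}=\rho(\gamma)$ for all $\gamma$. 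The permutation $\sigma$ records the action of $\kappa$ on the fibre over $x_0$, and imposing $\kappa^2=\mathrm{id}$ forces $\sigma$ to be an involution; this is exactly the involution in the statement. For this correspondence I would cite \cite[Section 3.3]{Cadoret} together with \cite[Lemma 2.3, Lemma 3.5]{GMR}.

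The geometric heart is to compute $c_*$ on a standard ``comb'' system of generators $\gamma_1,\dots,\gamma_{d(d-1)}$, with the real branch points indexed first. Two observations drive the computation. For the real branch points, the cumulative loop $\delta_i=\gamma_1\cdots\gamma_i$ encircles the first $i$ real points; since $c$ reflects it to a clockwise loop around the same conjugation-invariant set, one obtains $c_*(\delta_i)=\delta_i^{-1}$. For each conjugate pair, $c$ swaps the two enclosing loops while reversing their orientation and the order of the pairs, so $c_*$ interchanges the corresponding generators. Feeding $c_*(\delta_i)=\delta_i^{-1}$ into the lifting criterion gives $\sigma\,\rho(\delta_i)^{-1}\sigma^{-1}=\rho(\delta_i)$, and rewriting with $\tau_j^{-1}=\tau_j$ and reading off the partial products yields the relations $\sigma\circ\tau_i\circ\cdots\circ\tau_1\circ\sigma=(\tau_1\circ\cdots\circ\tau_i)^{-1}$ for $i=1,\dots,r-1$; the swapping observation yields the relations $\sigma\circ\tau_{r+i}\circ\sigma=\tau_{r'+1-i}$ for the pairs. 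The index $i=r$ is omitted because the global relation $\tau_{d(d-1)}\circ\cdots\circ\tau_1=\mathrm{id}$ makes that case redundant.

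Finally, exactly as in Lemma~\ref{lem:abstract_hurwitz_number}, isomorphism classes of covers are $\mathbb{S}_d$-conjugacy classes of tuples under simultaneous conjugation, and by orbit--stabilizer the stabilizer of a tuple is $\Aut(f)$, so the weighted count $\sum 1/|\Aut(f)|$ equals $1/d!$ times the number of tuples. The reality condition is conjugation-invariant, so the real tuples form a union of orbits, and restricting the count to tuples admitting an involution $\sigma$ with the relations above gives $H^{\real}_d(r)$. The main obstacle I expect is the third step: because $c$ reverses orientation, it converts loops into their inverses, and tracking the connecting paths back to standard form, the ``combing'' that conjugates each reflected loop, must be carried out carefully to land on precisely the stated relations, including the correct pairing index for the conjugate pairs and the verification that the $i=r$ relation is genuinely redundant.
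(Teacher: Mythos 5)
First, a point of reference: the paper does not prove this lemma at all --- it is quoted from Cadoret \cite[Section 3.3]{Cadoret}, so your proposal is in effect a reconstruction of Cadoret's argument rather than an alternative to a proof in the paper. Your architecture is the correct one and matches that source: translate reality of the cover into $c_*$-equivariance of the monodromy representation via the lifting criterion, compute $c_*$ on a conjugation-adapted generating system, and divide by $d!$ exactly as in Lemma~\ref{lem:abstract_hurwitz_number}. Your justification of the last step is right (the reality condition is preserved by simultaneous conjugation, so the real tuples form a union of $\mathbb{S}_d$-orbits), and so is your reading of the displayed relation as $\sigma\rho(\delta_i)\sigma=\rho(\delta_i)^{-1}$ on the partial products: one can confirm against Table~\ref{table:40covers} (e.g.\ type $\mathcal{A}_1$ with $\sigma=(12)$, where the partial product $(123)$ is inverted, not centralized) that this is the intended meaning. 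Your claim that the case $i=r$ is redundant given the product-one relation and the pair relations is also correct.

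The genuine gap is the step you yourself flag as the ``main obstacle'': it is not a technicality but the entire content of the lemma, and it is not carried out. The lifting criterion only yields $\sigma\,\rho(c_*\gamma_k)\,\sigma^{-1}=\rho(\gamma_k)$, and for a standard generating system $c_*(\gamma_k)$ is merely conjugate, by a $k$-dependent ``combing'' word $w_k$ in the other generators, to $\gamma_j^{-1}$ for the paired index $j$. To reach the clean relations of the statement you must choose the arcs so that each $w_k$ is controlled by the relations already established for smaller indices, and then verify that the $\rho(w_k)$ factors cancel; until that bookkeeping is done the stated relations are not derived, and in particular the pairing index for the conjugate pairs is not pinned down. (That index matters: as printed, $\tau_{r'+1-i}$ cannot be right --- for $r=4$, $r'=1$ it would pair $\tau_5$ with $\tau_1$, whereas type $\mathcal{A}_3$ in Table~\ref{table:40covers} with $\sigma=(12)$ shows the correct pairing $\sigma\tau_5\sigma=\tau_6$, i.e.\ $\tau_{r+2r'+1-i}$. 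Your proposal reproduces the printed index without noticing this.) A second, smaller gap: the assertion that $\kappa^2=\mathrm{id}$ ``forces $\sigma$ to be an involution'' needs more care when the cover has nontrivial deck transformations, since $\sigma$ is then only determined up to the centralizer of the monodromy image and one must argue that a real structure can be normalized so that the chosen lift squares to the identity. These are precisely the points that Cadoret's Section~3.3 is devoted to settling.
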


Geometrically, this means that, for a pair of complex conjugate points $q_1,q_2$, under complex conjugation the arc $\gamma_1$ around $q_1$ maps to $-\gamma_2$, where $\gamma_2$ is the arc around $q_2$.  Our next result says that the real Hurwitz number for
$d=3$ does not depend on $r$ and $r' =6-2r$.

\begin{proposition}\label{prop:real_abstract_hn_degree_3}
We have $H^{\real}_3(r)=8$ for $r=6,4,2,0$. 
\end{proposition}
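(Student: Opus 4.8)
The plan is to compute, for each of the four values $r \in \{6,4,2,0\}$, the number of six-tuples $\tau$ from Lemma~\ref{lem:abstract_hurwitz_number} that additionally satisfy the reality conditions of Lemma~\ref{lem:real_abstract_hurwitz_numbers} for some involution $\sigma \in \mathbb{S}_3$, and to verify that in every case this count equals $8 \cdot 3! = 48$, so that after dividing by $d! = 6$ we obtain $H^{\real}_3(r) = 8$. Since the full list of $40$ monodromy classes (i.e.\ the $240$ tuples before conjugation) is already tabulated in Table~\ref{table:40covers}, the cleanest route is to read the answer off that table: the real covers are exactly the eight rows marked with a checkmark, and each carries the specific involution $\sigma$ recorded there (either the identity or the transposition $(12)$). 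So the first thing I would do is confirm that the eight checkmarked types are precisely the tuples admitting \emph{some} $\sigma$ satisfying the relations, independently of how the real and complex-conjugate branch points are distributed.

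\textbf{The key observation} driving the uniformity in $r$ is that for $d=3$ the target has only $d(d-1) = 6$ branch points, and the involution $\sigma$ acts on the symmetric group $\mathbb{S}_3$, whose involutions are just $\mathrm{id}$ and the three transpositions. The two families of relations in Lemma~\ref{lem:real_abstract_hurwitz_numbers} — the ``palindromic'' condition $\sigma(\tau_i\cdots\tau_1)\sigma = (\tau_1\cdots\tau_i)^{-1}$ on the first $r-1$ partial products and the ``pairing'' condition $\sigma\tau_{r+i}\sigma = \tau_{r'+1-i}$ on the complex-conjugate arcs — both reduce, because every $\tau_j$ is a transposition and hence its own inverse, to statements about conjugation by $\sigma$ reversing or pairing up the sequence of transpositions. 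I would show that whether a given tuple $\tau$ (up to conjugacy) admits a compatible $\sigma$ is governed entirely by a reflection symmetry of its associated monodromy graph together with the requirement that the reflection be realized by conjugation in $\mathbb{S}_3$; this symmetry is exactly what the left/right index structure ($\ell$ versus $r$) and the checkmarks in Table~\ref{table:40covers} encode, and it does not see the precise split $r + 2r' = 6$.

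\textbf{The concrete verification} I would carry out case by case. For $r=6$ (all branch points real) only the palindromic condition survives, and one checks directly that exactly the eight checkmarked tuples are fixed, up to conjugacy, by some $\sigma$-twisted reversal. For $r=0$ (three conjugate pairs, $r'=3$) only the pairing condition applies, and I would match the first half of the tuple to the reversed second half under conjugation by the listed $\sigma$. For the intermediate values $r=4$ ($r'=1$) and $r=2$ ($r'=2$) both conditions are active on disjoint index ranges, and I would verify that the same eight tuples — with the same involutions — continue to satisfy the combined system, while no additional tuples enter. In each case the surviving eight classes correspond to $48$ unreduced tuples, giving $H^{\real}_3(r) = 48/6 = 8$.

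\textbf{The main obstacle} I anticipate is not the enumeration itself, which is a finite and essentially mechanical check against Table~\ref{table:40covers}, but rather proving the \emph{uniformity} cleanly — i.e.\ explaining conceptually why the count is insensitive to $r$ rather than merely observing it four times. The risk is that a direct case analysis for the four values of $r$ looks like four unrelated computations. To avoid this, the hard part is to set up the single reflection-symmetry criterion on monodromy graphs that simultaneously captures all four reality conditions, so that the constancy of the answer becomes a structural statement about which graphs admit an $\mathbb{S}_3$-realizable end-reversing symmetry, rather than a numerical coincidence. Once that criterion is isolated, the equality $H^{\real}_3(r)=8$ follows for all four values at once from the eight checkmarks in the table.
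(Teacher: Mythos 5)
Your overall strategy---check Cadoret's criterion from Lemma~\ref{lem:real_abstract_hurwitz_numbers} against the $40$ tuples of Table~\ref{table:40covers} for each $r$---is the same finite verification the paper carries out. But your central structural claim is false, and it would derail the argument. You assert that the real covers are ``exactly the eight rows marked with a checkmark'' for every value of $r$, and that ``the same eight tuples --- with the same involutions --- continue to satisfy the combined system'' as $r$ varies. The checkmarks in Table~\ref{table:40covers} record only the case $r=6$, where the real types are $\mathcal{A}_1, \mathcal{A}_2, \mathcal{B}_1, \mathcal{B}_2, \mathcal{C}^\ell_1, \mathcal{C}^r_1, \mathcal{D}^\ell_1, \mathcal{D}^r_1$. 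The paper's computation shows that for $r=4$ the real types are instead $\mathcal{A}_3, \mathcal{A}_{12}, \mathcal{B}_1, \mathcal{B}_2, \mathcal{C}^\ell_2, \mathcal{C}^r_1, \mathcal{D}^\ell_1, \mathcal{D}^r_1$; for $r=2$ they are $\mathcal{A}_9, \mathcal{A}_{12}, \mathcal{B}_1, \mathcal{B}_2, \mathcal{D}^\ell_1, \mathcal{D}^r_1, \mathcal{E}^\ell_3, \mathcal{E}^r_1$; and for $r=0$ they are $\mathcal{A}_5, \mathcal{A}_{17}, \mathcal{B}_1, \mathcal{B}_2, \mathcal{D}^\ell_1, \mathcal{D}^r_1, \mathcal{E}^\ell_3, \mathcal{E}^r_5$. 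Only four types ($\mathcal{B}_1, \mathcal{B}_2, \mathcal{D}^\ell_1, \mathcal{D}^r_1$) are common to all four cases; in particular $\mathcal{A}_1$ and $\mathcal{A}_2$ fail the criterion as soon as a complex-conjugate pair appears, while types such as $\mathcal{E}^\ell_3$ become real only when $r\leq 2$. So the conditions in Lemma~\ref{lem:real_abstract_hurwitz_numbers} very much ``see'' the split $r+2r'=6$, contrary to your key observation.

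Consequently your proposed shortcut---a single $r$-independent reflection-symmetry criterion on monodromy graphs from which the constancy of the count would follow ``all at once from the eight checkmarks''---cannot work as stated: the set of real monodromy representations is not constant in $r$, only its cardinality is. The correct (and, in the paper, only) argument is four separate verifications of the criterion, one for each $r$, each producing a different set of eight classes. If you want a conceptual explanation of why the cardinality is always $8$, that would require a genuinely new idea (e.g.\ exhibiting bijections between the four sets of real classes), which neither your sketch nor the paper supplies. As written, your proof would report the wrong sets of real covers for $r=4,2,0$ and would only accidentally land on the right number.
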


\begin{proof}
We prove this by investigating all monodromy representations in Table~\ref{table:40covers}. Using explicit computations, we identify all six-tuples $\tau$ that satisfy the conditions in 
Lemma~\ref{lem:real_abstract_hurwitz_numbers}.
For a cover with $6$ real branch points, we obtain $8$ real monodromy representations, of types $\mathcal{A}_1, \mathcal{A}_2, \mathcal{B}_1 ,\mathcal{B}_2, \mathcal{C}^l_1, \mathcal{C}^r_1,\mathcal{D}^l_1$ and $ \mathcal{D}^r_1$, listed in Table \ref{table:40covers} with coloring. 
For a cover with $4$ real branch points and a pair of complex conjugate branch points, 
we again obtain $8$ real monodromy representations. These are the types $\mathcal{A}_3 , \mathcal{A}_{12}, \mathcal{B}_1 ,\mathcal{B}_2, \mathcal{C}^l_2, \mathcal{C}^r_1,\mathcal{D}^l_1$ and $ \mathcal{D}^r_1$. 
For two real branch points and two complex conjugate pairs, we again obtain $8$ real monodromy representations, namely of types  $\mathcal{A}_{9}, \mathcal{A}_{12}, \mathcal{B}_1 ,\mathcal{B}_2,   \mathcal{D}^l_1, \mathcal{D}^r_1, \mathcal{E}^{\ell}_3 $ and $\mathcal{E}^{r}_1$.
Finally, for three pairs of complex conjugate branch points, we find the $8$ types  $\mathcal{A}_{5}, \mathcal{A}_{17}, \mathcal{B}_1 ,\mathcal{B}_2 ,\mathcal{D}^l_1, \mathcal{D}^r_1, \mathcal{E}^{\ell}_3 $ and $\mathcal{E}^{r}_5$.
\end{proof}

The situation is more interesting for $d=4$, where we obtained the following result:

\begin{theorem} \label{thm:realcount4}
The real Hurwitz numbers for degree $4$ coverings of $\PP^1$ by genus $3$ curves are 
$$
\begin{matrix}
        H^{\real}_4(12)= 20590 , & 
        H^{\real}_4(10)= 15630 , &
        H^{\real}_4(8)= 11110 , &
        H^{\real}_4(6)= 7814 , 
\\        &
        H^{\real}_4(4)= 5654 , &
        H^{\real}_4(2) = 4070 , \,&
        H^{\real}_4(0)= 4350.
\end{matrix}        
$$
\end{theorem}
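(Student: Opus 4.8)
The plan is to compute each real Hurwitz number $H^{\real}_4(r)$ by a direct enumeration of tuples of transpositions, exactly as the definition in Lemma~\ref{lem:real_abstract_hurwitz_numbers} demands, but now inside $\mathbb{S}_4$ rather than $\mathbb{S}_3$. Concretely, I would enumerate all tuples $\tau=(\tau_1,\dots,\tau_{12})$ of transpositions in $\mathbb{S}_4$ whose product is the identity and whose entries generate a transitive subgroup, and then, for each admissible value of $r$ (with $r'=(12-r)/2$), count those tuples for which there exists an involution $\sigma\in\mathbb{S}_4$ satisfying the two families of conjugation relations stated in the lemma. Finally I divide each raw count by $|\mathbb{S}_4|=24$. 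Since the unweighted count $H_4=7528620$ is already known from Corollary~\ref{cor:7528620}, the magnitudes of the seven outputs (all several thousand) are consistent with a computer search, so this is fundamentally a finite computation rather than a theorem requiring new ideas.

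First I would fix the combinatorial search space: a transposition in $\mathbb{S}_4$ is one of $\binom{4}{2}=6$ choices, so there are $6^{12}$ candidate tuples before imposing constraints. This is far too large to sweep naively, so the first real step is to impose the product-equals-identity and transitivity constraints efficiently. I would generate tuples by fixing $\tau_1,\dots,\tau_{11}$ freely and forcing $\tau_{12}=(\tau_{11}\cdots\tau_1)^{-1}$, discarding the cases where this forced element is not a transposition, and then testing transitivity of the generated subgroup. This produces exactly the tuples underlying $H_4=7528620$ when multiplied by the weighting. Second, for the real count at a given $r$, I would loop over the (finitely many) involutions $\sigma\in\mathbb{S}_4$ — the identity, the six transpositions, and the three double transpositions — and check the relations $\sigma\,\tau_i\cdots\tau_1\,\sigma=(\tau_1\cdots\tau_i)^{-1}$ for $i=1,\dots,r-1$ together with $\sigma\,\tau_{r+i}\,\sigma=\tau_{r'+1-i}$ for $i=1,\dots,r'$. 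A tuple is counted as real for that $r$ if at least one such $\sigma$ exists; care must be taken to count each tuple once rather than once per witnessing $\sigma$.

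I expect the main obstacle to be organizational rather than mathematical: correctly encoding the two distinct conjugation conditions of Lemma~\ref{lem:real_abstract_hurwitz_numbers}, where the first $r-1$ indices are governed by an ``invariance up to inversion'' rule and the remaining $2r'$ indices are paired up by $\sigma$ in reverse order, and doing so consistently across all seven values of $r\in\{0,2,4,6,8,10,12\}$. A subtle point is the existential quantifier over $\sigma$: the same tuple may admit several compatible involutions, and the count must register the tuple, not the pairs $(\tau,\sigma)$, so I would record the set of qualifying tuples and report its cardinality divided by $24$. Because the search over $6^{12}$ is large but finite, I would implement it as a backtracking enumeration (pruning as soon as a partial product cannot be completed to the identity by a single transposition) and verify the total against the known value $H_4=7528620$; agreement of the unconstrained count serves as a correctness check before extracting the seven real subcounts listed in the theorem.
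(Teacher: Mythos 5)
Your proposal is correct and is essentially the computation the paper itself performs: enumerate the monodromy representations underlying $H_4=7528620$, then for each $r$ apply Cadoret's criterion from Lemma~\ref{lem:real_abstract_hurwitz_numbers} by searching over involutions $\sigma\in\mathbb{S}_4$ and counting tuples (not pairs $(\tau,\sigma)$) admitting at least one witness. The only cosmetic difference is that the paper enumerates one lexicographically minimal representative per conjugacy class (which lets it restrict the candidate involutions to $\mathrm{id}$, $(12)$, $(34)$, $(12)(34)$ for $r\geq 2$), whereas you enumerate all tuples and divide by $24$ at the end; since the generated group is all of $\mathbb{S}_4$ with trivial centralizer and the criterion is conjugation-equivariant, the two bookkeeping schemes agree.
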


\begin{proof}
This is found by a direct computation using \textsc{Oscar} \cite{Oscar}.
We start by constructing a list of all monodromy representations of degree $4$ and genus $3$. As monodromy representations occur in equivalence classes, we construct only one canonical representative for each class. This is the element of the equivalence class that is minimal with respect to the lexicographic ordering.
The resulting list of $7528620$ monodromy representations was computed in about $6.5$ hours.
In other words, we embarked on a table just like Table~\ref{table:40covers},
but its number of rows is now $7528620$ instead of $40$. Those are the two numbers seen in
Corollary~\ref{cor:7528620}.

We next applied Cadoret's criterion in \cite[Section 3.3, formula $(\star)$]{Cadoret} to our big table.
This criterion was stated in Lemma \ref{lem:real_abstract_hurwitz_numbers}.
We start with our $ 7528620$ tuples $\tau$, computed as just described,
and mentioned in Lemma \ref{lem:abstract_hurwitz_number}.
According to Cadoret's criterion, we must check for each $12$-tuple $\tau$  whether there exists
an involution $\sigma$ that satisfies certain equations in the symmetric group $\mathbb{S}_4$.
These depend on the number $r$ of real branch points. Note that $ r = \{ 0,2,4,\ldots, 12\}$.
   For $r = 2, 4, \ldots, 12$, the only possible involutions $\sigma$ are $id$, $(12)$, $(34)$ and $(12)(34)$, by the structure of the canonical representative computed for the list.
   For $r = 0$, all involutions in $\mathbb{S}_4$ can appear.
      For each involution $\sigma$ and each value of $r$, it
took between $5$ and $30$ minutes to scan our big table, and to determine how many
$12$-tuples $\tau$ satisfy Cadoret's criterion for the pair $(r,\sigma)$.
For each $r$, we collected the number of tuples $\tau$ for which the
answer was affirmative. This gave the numbers stated in
Theorem \ref{thm:realcount4}.
\end{proof}

We next relate this Hurwitz combinatorics  to the polynomial systems in Section~\ref{sec2}.
Recall that we seek orbits of the group
$\mathcal{G}$ acting on $\PP^{\binom{d+2}{2}-1}$. An orbit is called
{\em real} if it has the form $\mathcal{G} A$ where $A$ is a ternary form with
real coefficients. Since $\mathcal{G}$ is defined over $\mathbb{R}$,
an orbit is real if and only if its unique intersection point with
the linear space $L_d$ in Theorem~\ref{thm:normalform}
is real. Thus, identifying the real orbits  among those with prescribed branch points
 is equivalent to deciding how many of the 
 $\mathfrak{h}_d$ complex solutions in our exact formulations (\ref{eq:system3}) and (\ref{eq:system4})
  are~real.

Suppose that the given binary form
 $B \in \mathcal{V}_d$ has real coefficients, and let $r$
  denote the number of real zeros of $B$. 
 In addition, there are $d(d-1)-2r$ pairs of complex conjugate zeros.
 It turns out that for $d=3$ the number of real solutions is independent of the number~$r$.
Our census of real plane Hurwitz numbers for quartics will be presented in Section \ref{sec5}.

\begin{corollary} \label{cor:from40to8}
The real plane Hurwitz number for cubics equals eight.
To be precise, the system  (\ref{eq:system3}) 
always has $8$ real solutions, 
provided the given parameters $\beta_{ij}$ are real and generic.
\end{corollary}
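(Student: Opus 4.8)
The plan is to transfer the count from the abstract side, where Proposition~\ref{prop:real_abstract_hn_degree_3} already gives $H^{\real}_3(r)=8$ for every admissible $r$, to the plane recovery problem, by exploiting that for $d=3$ the two enumerations coincide \emph{and} that the coincidence is compatible with complex conjugation. As recorded in the discussion preceding the corollary, a $\mathcal{G}$-orbit is real precisely when its unique representative in $L_3$ is real, so a real orbit is the same thing as a real solution of~(\ref{eq:system3}). Thus it suffices to show that exactly eight of the $40$ solutions guaranteed by Corollary~\ref{cor:deg3} are fixed by conjugation, for any real generic $B$.

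First I would make the bijection of Corollary~\ref{cor:deg3} explicit as a map of sets: each solution $A\in L_3$ determines the degree-three cover $\pi|_{V(A)}\colon V(A)\to\PP^1$, whose branch divisor is $V(B)$, and hence a monodromy representation $\tau(A)$, well defined up to $\mathbb{S}_3$-conjugacy once a base point and ordered loops around the branch points are chosen. By the proof of Corollary~\ref{cor:deg3} this assignment matches the $40$ plane solutions bijectively with the $40$ abstract covers of Lemma~\ref{lem:abstract_hurwitz_number}. The essential point is that the assignment is defined over $\RR$: the projection $\pi$ and the group $\mathcal{G}$ are defined over $\RR$, so coordinatewise complex conjugation $c\colon(x:y:z)\mapsto(\bar x:\bar y:\bar z)$ carries $V(A)$ to $V(\bar A)$ and commutes with $\pi$. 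Consequently $\tau(\bar A)$ is the conjugate monodromy representation obtained from $\tau(A)$ by the relabelling that conjugation induces on the branch points of the real form $B$, fixing the $r$ real points and swapping each complex-conjugate pair.

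Next I would identify ``solution fixed by conjugation'' with ``real cover'' in the sense of Lemma~\ref{lem:real_abstract_hurwitz_numbers}. If $A$ is real then $c$ restricts to an antiholomorphic involution of $V(A)$ compatible with conjugation on $\PP^1$, which is exactly the datum making the cover real; unwinding this involution on the fiber over the base point yields an element $\sigma\in\mathbb{S}_3$ satisfying Cadoret's relations, so $\tau(A)$ is a real monodromy representation. Conversely, a real abstract cover has monodromy fixed up to conjugacy by the branch-point relabelling, and pulling this back through the bijection produces a solution $A$ with $\bar A$ in the same $\mathcal{G}$-orbit, i.e.\ a real orbit with a real representative in $L_3$. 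Hence the number of real solutions of~(\ref{eq:system3}) with $r$ real branch points equals $H^{\real}_3(r)$.

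Combining these steps with Proposition~\ref{prop:real_abstract_hn_degree_3}, which yields $H^{\real}_3(r)=8$ for $r=6,4,2,0$, gives exactly eight real solutions regardless of how many branch points are real; since a real generic $B$ realizes one of these values of $r$, the system~(\ref{eq:system3}) always has eight real solutions. The hard part will be the conjugation-equivariance in the middle step: one must check that the geometric real structure $c|_{V(A)}$ induces precisely the combinatorial involution $\sigma$ of Lemma~\ref{lem:real_abstract_hurwitz_numbers}, with the correct matching of real versus complex-conjugate branch points, rather than merely some real structure. Because the eight real types are listed explicitly in Table~\ref{table:40covers}, an alternative to the abstract argument is to track a real representative $A$ through each of those monodromy types and confirm reality directly; this also sidesteps any concern about whether Cadoret's count and the naive count of real solutions agree.
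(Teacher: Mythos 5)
Your proposal is correct and follows essentially the same route as the paper: both transfer Proposition~\ref{prop:real_abstract_hn_degree_3} through the bijection between plane covers and abstract covers from the proof of Corollary~\ref{cor:deg3}, checking that this bijection is compatible with complex conjugation (the paper compresses your equivariance argument into the single remark that the shift involution from Ongaro's construction is itself real, while you also use the observation, already recorded before the corollary, that a real $\mathcal{G}$-orbit is detected by its real representative in $L_3$). The conjugation-equivariance you flag as the hard part is exactly the content the paper dispatches in one line, so no new idea is needed beyond what you have written.
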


\begin{proof}
This is derived from
Corollary \ref{cor:7528620}
and Proposition~\ref{prop:real_abstract_hn_degree_3}. Namely,
we use the fact that plane covers are in bijection with abstract covers.
Let $C \rightarrow \PP^1$ be a real cover by an elliptic curve~$C$.
The involution of $C$ that is referred to in the proof of Corollary \ref{cor:deg3}
 is real as well.
 Another proof, following Clebsch \cite{ClebschShort, ClebschLong},
appears in Section~\ref{sec4}.
\end{proof}

\begin{algo}
\label{alg:recovery3}
We implemented numerical recovery for cubics that matches
Table \ref{table:40covers}. The \underbar{input} is a binary sextic $B$ with real coefficients.
The \underbar{output} consists of $40$ cubics $A$ in $L_3$ along
with their labeling by $\mathcal{A}_1,\mathcal{A}_2,\ldots, \mathcal{E}_6^r$.
The cubics are found with \texttt{HomotopyContinuation.jl} by solving
(\ref{eq:system3}). We next fix loops $\gamma_1,\gamma_2,\ldots,\gamma_6$
around the six roots of $B$  that are
compatible with complex conjugation on the Riemann sphere $\PP^1$.
 If all six roots are real then we use  \cite[Construction 2.4]{GMR}.
For each cubic $A$, we track the three roots $z$ of $A(x,y,z)=0$
as $(x:y)$ cycles along $\gamma_i$. The resulting permutation of the
three roots is the transposition~$\tau_i$. This process maps $A$ to a
tuple $\tau$ in Table~\ref{table:40covers}. This is unique up to conjugacy by $\mathbb{S}_3$.
The $8$ real cubics $A$ are mapped to the $8$ real monodromy representations,
in the proof of Proposition~\ref{prop:real_abstract_hn_degree_3}.
\end{algo}

\section{Cubics: Solutions in Radicals}
\label{sec4}

The theme of this paper is the rational map (\ref{eq:map1}) that takes a
ternary form to its $z$-discriminant.
This map is finite-to-one onto its image $\mathcal{V}_d$,
assuming the domain $\PP^{\binom{d+2}{2}-1}$ is understood modulo the group $\mathcal{G}$.
Note that $\mathcal{V}_d$
is an irreducible variety of dimension $\binom{d+2}{2}-4$ in $\PP^{d(d-1)}$.
The general fiber of the map consists of $\mathfrak{h}_d$ complex points.
We are curious about the Galois group ${\rm Gal}_d$
associated with this covering. Here {\em Galois group} is defined as in \cite{HarrisGalois}.
Informally, ${\rm Gal}_d$ is the subgroup
of geometry-preserving permutations of the $\mathfrak{h}_d$ solutions.

\begin{theorem} \label{thm:25920}
The Galois group ${\rm Gal}_3$ for cubics is the simple group of order $25920$, namely
\begin{equation}
\label{eq:weylrole} {\rm Gal}_3 \,\, =   \,\,{\rm SU}_4(\mathbb{F}_2) \,\, =
\,\, {\rm PSp}_4(\mathbb{F}_3) \,\, = \,\, W(E_6)/\! \pm. 
\end{equation}
This is the Weyl group of type $E_6$ modulo its center, here
realized as $4 \times 4$ matrix groups over the finite fields
$\mathbb{F}_2$ and $\mathbb{F}_3$.
 The action of ${\rm Gal}_3$ on the $40$
monodromy graphs in  Table \ref{table:40covers}
agrees with that of the symplectic group
on the $40$ points in the projective space $\PP^3$ over $\mathbb{F}_3$.
\end{theorem}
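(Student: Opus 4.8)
The plan is to realize $\mathrm{Gal}_3$ as a Hurwitz monodromy group and then linearize its action over $\mathbb{F}_3$. By Corollary~\ref{cor:deg3} the $40$ solutions are in bijection with the $40$ tuples $\tau=(\tau_1,\dots,\tau_6)$ of transpositions in $\mathbb{S}_3$ with trivial product, taken up to conjugacy, and $\mathrm{Gal}_3$ is the image of the monodromy of the family, i.e.\ of the fundamental group of the space of squarefree binary sextics $B$, acting on this $40$-element set by braiding the six branch points. So the entire question reduces to understanding one explicit braid action on the $40$ factorizations.

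First I would attach to the six branch points the genus-two curve $C$ that is the double cover of $\PP^1$ branched at them. Composing each $\tau_i$ with the sign homomorphism $\mathbb{S}_3\to\{\pm1\}$ sends every meridian to $-1$, which is exactly the monodromy of $C\to\PP^1$; by Riemann--Hurwitz $C$ has genus $2$. The residual $A_3=\mathbb{Z}/3$ data then specifies a $\mathbb{Z}/3$-cover of $C$, that is, a class in $H^1(C;\mathbb{F}_3)$, and conjugating $\tau$ by a transposition multiplies this class by $-1$. Hence each connected cover corresponds to a nonzero class up to sign, i.e.\ to a point of $\PP(H^1(C;\mathbb{F}_3))=\PP^3(\mathbb{F}_3)$. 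Since $\dim_{\mathbb{F}_3}H^1(C;\mathbb{F}_3)=4$, this projective space has exactly $(3^4-1)/2=40$ points, matching $\mathfrak{h}_3$, so the correspondence is a bijection; its explicit form is the dictionary recorded in the last column of Table~\ref{table:40covers}. The intersection pairing on $H^1(C;\mathbb{F}_3)$ is a nondegenerate alternating form preserved by every self-homeomorphism of $C$, hence by the braid monodromy, which yields an injection $\mathrm{Gal}_3\hookrightarrow\mathrm{PSp}_4(\mathbb{F}_3)$ acting on $40$ points precisely as the symplectic group acts on $\PP^3(\mathbb{F}_3)$.

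It then remains to prove surjectivity onto the full symplectic group. The braiding of the six points realizes the hyperelliptic mapping class group of $C$, whose image in $\mathrm{Sp}_4(\ZZ)$ is of finite index by A'Campo's theorem and in fact surjects onto $\mathrm{Sp}_4(\mathbb{F}_3)$ after reduction modulo the odd prime $3$; projectivizing gives all of $\mathrm{PSp}_4(\mathbb{F}_3)$. I would make this concrete by generators: colliding two adjacent branch points produces a Dehn twist about a vanishing cycle, which acts on $H^1(C;\mathbb{F}_3)$ as a symplectic transvection, and I would then check that the transvections coming from the braid generators already generate $\mathrm{Sp}_4(\mathbb{F}_3)$ by evaluating them on an explicit symplectic basis and verifying generation in \textsc{Oscar}. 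This simultaneously certifies the abstract isomorphism and the asserted action on $\PP^3(\mathbb{F}_3)$. The remaining equalities in (\ref{eq:weylrole}) are then purely group-theoretic: the classical exceptional isomorphisms $W(E_6)/\!\pm\,\cong\,\mathrm{PSp}_4(\mathbb{F}_3)\,\cong\,\mathrm{SU}_4(\mathbb{F}_2)$ for the simple group of order $25920$ follow formally once the symplectic description is established.

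The main obstacle is the surjectivity step: the containment in $\mathrm{PSp}_4(\mathbb{F}_3)$ is essentially formal once the symplectic module is identified, but excluding a proper subgroup requires genuinely controlling the image of the braid representation mod $3$. I would either invoke A'Campo's theorem directly or, in the computational spirit of this paper, verify generation from the explicit transvection matrices; an independent cross-check is that the cycle-type statistics of the $40$-point action, obtained by specializing the system (\ref{eq:system3}) at several primes and computing Frobenius, must coincide with those of $\mathrm{PSp}_4(\mathbb{F}_3)$ on $\PP^3(\mathbb{F}_3)$. A secondary subtlety is the quotient by $\pm 1$: one must confirm that it is exactly the scalar subgroup $\mathbb{F}_3^\times=\{\pm1\}$ that acts trivially on the $40$ solutions, which is what forces $\mathrm{Gal}_3=W(E_6)/\!\pm$ rather than some central extension.
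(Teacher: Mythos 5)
Your argument is correct in outline but follows a genuinely different route from the paper. The paper reduces to Clebsch's problem of writing a binary sextic as $B=U^3-V^2$, cites Clebsch for the implicit determination of the Galois group inside $\mathbb{S}_{40}$ and Hunt/Todd for its identification with the simple group of order $25920$, and then certifies everything by exhibiting the five braid generators $\Gamma_1,\dots,\Gamma_5$ explicitly, both as products of nine $3$-cycles on the rows of Table \ref{table:40covers} and as symplectic $4\times 4$ matrices over $\mathbb{F}_3$, with the isomorphism checked in \textsc{GAP}. You instead explain \emph{why} $\PP^3(\mathbb{F}_3)$ appears: the sign map $\mathbb{S}_3\to\{\pm1\}$ attaches to the six branch points the genus-$2$ double cover $C$, the residual $A_3$-data is an unramified $\ZZ/3$-cover of $C$, i.e.\ a nonzero class in $H^1(C;\mathbb{F}_3)\cong\mathbb{F}_3^4$ up to sign, and the braid monodromy preserves the intersection form, giving $\mathrm{Gal}_3\hookrightarrow\mathrm{PSp}_4(\mathbb{F}_3)$; surjectivity comes from A'Campo's theorem on the mod-$p$ hyperelliptic symplectic representation (or from checking that the transvections generate). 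This is essentially the conceptual content behind Clebsch's ``Dreitheilung'' and Elkies' manuscript, and it buys a structural explanation of the last column of Table \ref{table:40covers} that the paper only records as a dictionary; what it gives up is the explicit, certified matching of the $40$ labels that the paper needs elsewhere (e.g.\ for Algorithm \ref{alg:clebsch} and the real count).

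Two small points to tighten. First, your bijection between the $40$ factorization classes and the $40$ points of $\PP(H^1(C;\mathbb{F}_3))$ is asserted from equal cardinalities; you should check one direction explicitly, e.g.\ surjectivity: every nonzero class $c$ satisfies $\iota^*c=-c$ automatically (the hyperelliptic involution acts as $-1$ on $H^1$), so the corresponding $\ZZ/3$-cover of $C$ extends to a connected $\mathbb{S}_3$-cover of $\PP^1$ with transposition monodromy, whose index-$2$ quotient is a simply branched trigonal cover. Second, the identification of the Harris-style Galois group of the fiber of (\ref{eq:map1}) with the braid action on Hurwitz factorizations uses the equivariant bijection between the $40$ cubics and the $40$ monodromy representations (Corollary \ref{cor:deg3} and Algorithm \ref{alg:recovery3}); this is available in the paper but should be invoked rather than taken for granted.
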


This is a modern interpretation of 
  Clebsch's work \cite{ClebschShort, ClebschLong} on
binary sextics. We~first learned about the role of the Weyl group in (\ref{eq:weylrole})
through  Elkies' unpublished manuscript~\cite{elkies}.

\begin{remark}
The last two columns of Table~\ref{table:40covers}
identify the $40$ monodromy representations with
$\PP^3(\mathbb{F}_3)$ and with Clebsch's $40$ cubics in \cite{ClebschLong}.
The bijection we give respects the maps
${\rm Gal}_3 \simeq {\rm PSp}_4(\mathbb{F}_3) \hookrightarrow \mathbb{S}_{40}$.
But it is  far from unique. The same holds for
Cayley's table in \cite{CayleyAlgo}.
\end{remark}

\begin{proof}[Proof of Theorem~\ref{thm:25920}]
We consider cubics $  \,A =  z^3 + A_2(x,y)  z + A_3(x,y)$.
This is the normal form in (\ref{eq:othernf}).
The discriminant equals $\,{\rm discr}_z(A) = 4 A_2^3 + 27 A_3^2$.
Thus our task is as follows: \smallskip \\
{\em Given a binary sextic $B $, compute all pairs of binary forms
$(A_2,A_3)$  such that $4 A_2^3 + 27 A_3^2 = B$}.

\smallskip
This system has $40$ solutions, modulo the scaling of $A_2$ and $A_3$ by roots of unity.
Setting $U = \sqrt[3]{4} \cdot A_2 $ and
$V = \sqrt{-27} \cdot A_3$, we must solve 
 the following problem:
{\em Given a binary sextic $B $, compute all decompositions
into a binary quadric $U$ and a binary cubic $V$:}
\begin{equation}
\label{eq:BUVequation}
 B \,\, = \,\, U^3-V^2 . 
 \end{equation}
This is precisely the problem addressed by 
Clebsch in \cite{ClebschShort, ClebschLong}.
By considering the change of his labeling upon altering the base solution, he implicitly determined the Galois group as a subgroup of $\mathbb{S}_{40}$. The identification
of this group with $W(E_6)$ modulo its center
appears in a number of sources, including \cite{Hunt, Todd}.
These sources show that ${\rm Gal}_3$ is also the Galois group
of the  $27$ lines on the cubic surface.
Todd \cite{Todd} refers to permutations of the
 $40$ Jacobian planes, and Hunt \cite[Table 4.1]{Hunt}
 points to the $40$ triples of trihedral pairs.
The connection to cubic surfaces goes back to Jordan in 1870, 
and it was known to  Clebsch.

As a subgroup of the symmetric group $\mathbb{S}_{40}$, our Galois group is generated by
five permutations $\Gamma_1,\ldots,\Gamma_5$. These correspond to consecutive transpositions
$(\gamma_i \gamma_{i+1})$
of the six loops
$\gamma_1,\gamma_2,\ldots,\gamma_6$ in
 Algorithm \ref{alg:recovery3}. 
Each generator is a product of nine $3$-cycles in $\mathbb{S}_{40}$. 

Here are the formulas for
$\Gamma_1,\ldots,\Gamma_5$
as permutations of the $40$ rows in Table~\ref{table:40covers}:
\begin{small}
$$  \!
\Gamma_1 = 
 (\mathcal{A}_{10}  \mathcal{A}_6  \mathcal{A}_1\!)
 (\mathcal{A}_8  \mathcal{A}_7  \mathcal{A}_3\!)
 (\mathcal{A}_9  \mathcal{A}_5  \mathcal{A}_4\!)
 (\mathcal{A}_{17}  \mathcal{A}_{13}  \mathcal{A}_{12}\!)
 (\mathcal{A}_{18}  \mathcal{A}_{14}  \mathcal{A}_2\!)
 (\mathcal{A}_{16}  \mathcal{A}_{15}  \mathcal{A}_{11}\!)
 (\mathcal{E}^r_2  \mathcal{E}^r_6  \mathcal{E}^r_3)
 (\mathcal{E}^r_4  \mathcal{E}^r_1  \mathcal{E}^r_5)
 (\mathcal{C}^r_3  \mathcal{C}^r_2  \mathcal{C}^r_1)
$$
$$
\Gamma_2 \,= 
 (\mathcal{E}^\ell_4  \mathcal{A}_{14}  \mathcal{A}_{10})
 (\mathcal{E}^\ell_6  \mathcal{A}_{15}  \mathcal{A}_9)
 (\mathcal{E}^\ell_2  \mathcal{A}_{13}  \mathcal{A}_8)
 (\mathcal{B}_1  \,\mathcal{E}^r_1  \mathcal{E}^r_2)
 (\mathcal{D}^r_1  \,\mathcal{C}^r_2  \mathcal{C}^r_3)
 (\mathcal{B}_2  \,\mathcal{E}^r_6 \mathcal{E}^r_4)
 (\mathcal{E}^\ell_5  \mathcal{A}_7  \mathcal{A}_{17})
 (\mathcal{E}^\ell_1  \mathcal{A}_5  \mathcal{A}_{16})
 (\mathcal{E}^\ell_3  \mathcal{A}_6  \mathcal{A}_{18})
$$
$$
\Gamma_3 \,=\,
 (\mathcal{C}^\ell_3 \, \mathcal{E}^\ell_5 \, \mathcal{E}^\ell_4)
 (\mathcal{C}^\ell_1 \, \mathcal{E}^\ell_1\,  \mathcal{E}^\ell_2)
 (\mathcal{C}^\ell_2  \,\mathcal{E}^\ell_3  \,\mathcal{E}^\ell_6)
 (\mathcal{A}_{17}  \mathcal{A}_{11}  \mathcal{A}_{14})
 (\mathcal{A}_{18}  \mathcal{A}_{12}  \mathcal{A}_{15})
 (\mathcal{A}_{16}  \mathcal{A}_2  \mathcal{A}_{13})
 (\mathcal{E}^r_2  \mathcal{E}^r_1  \mathcal{C}^r_1)
 (\mathcal{E}^r_4  \mathcal{C}^r_2  \mathcal{E}^r_3)
 (\mathcal{C}^r_3  \mathcal{E}^r_6  \mathcal{E}^r_5)
$$
$$
\Gamma_4 =
 (\mathcal{D}^\ell_1  \,\mathcal{C}^\ell_2 \, \mathcal{C}^\ell_3)
 (\mathcal{E}^\ell_6 \, \mathcal{B}_2 \, \mathcal{E}^\ell_5)
 (\mathcal{E}^\ell_2  \,\mathcal{E}^\ell_1  \,\mathcal{B}_1)
 (\mathcal{A}_8  \mathcal{A}_{16}  \mathcal{E}^r_2)
 (\mathcal{A}_9  \mathcal{E}^r_4  \mathcal{A}_{17})
 (\mathcal{E}^r_3  \mathcal{A}_3  \mathcal{A}_{11})
 (\mathcal{E}^r_5  \mathcal{A}_{12}  \mathcal{A}_4)
 (\mathcal{A}_{15}  \mathcal{E}^r_6  \mathcal{A}_7)
 (\mathcal{A}_{13}  \mathcal{A}_5  \mathcal{E}^r_1)
$$
$$
\Gamma_5 =
 (\mathcal{C}^\ell_3  \mathcal{C}^\ell_2  \mathcal{C}^\ell_1)
 (\mathcal{E}^\ell_4  \mathcal{E}^\ell_6  \mathcal{E}^\ell_2)
 (\mathcal{E}^\ell_5  \mathcal{E}^\ell_3  \mathcal{E}^\ell_1)
 (\mathcal{A}_{10}  \mathcal{A}_9  \mathcal{A}_8\!)
 (\mathcal{A}_{17}  \mathcal{A}_{18}  \mathcal{A}_{16}\!)
 (\mathcal{A}_4  \mathcal{A}_3  \mathcal{A}_1\!)
 (\mathcal{A}_{11}  \mathcal{A}_{12}  \mathcal{A}_2\!)
 (\mathcal{A}_{14}  \mathcal{A}_{15}  \mathcal{A}_{13}\!)
 (\mathcal{A}_7  \mathcal{A}_6  \mathcal{A}_5)
$$
\end{small}
A compatible bijection with the labels of Clebsch
\cite[Section 9]{ClebschLong} is given in the second-to-last column in 
Table~\ref{table:40covers}. The last column gives a 
bijection with the $40$ points in the projective space
$\PP^3$ over the three-element field $\mathbb{F}_3$.
This bijection is compatible with the action of the matrix group
$ {\rm PSp}_4(\mathbb{F}_3) $. Here, the five generators above are mapped to 
matrices of order $3$:
 \begin{small} $$ \Gamma_1 = \begin{bmatrix} 
1 \! & \! 1 \! & \! 2 \! & \! 0 \\
0 \! & \! 1 \! & \! 0 \! & \! 0 \\
0 \! & \! 0 \! & \! 1 \! & \! 0 \\
0 \! & \! 1 \! & \! 2 \! & \! 1 
\end{bmatrix}\!, \, \Gamma_2 = \begin{bmatrix} 
1 \! & \! 0 \! & \! 0 \! & \! 0 \\
2 \! & \! 1 \! & \! 0 \! & \! 2 \\
1 \! & \! 0 \! & \! 1 \! & \! 1 \\
0 \! & \! 0 \! & \! 0 \! & \! 1 
\end{bmatrix}\!, \,\Gamma_3 = \begin{bmatrix} 
1 \! & \! 1 \! & \! 0 \! & \! 0 \\
0 \! & \! 1 \! & \! 0 \! & \! 0 \\
0 \! & \! 0 \! & \! 1 \! & \! 0 \\
0 \! & \! 0 \! & \! 0 \! & \! 1 
\end{bmatrix}\!, \,\Gamma_4 = \begin{bmatrix} 
1 \! & \! 0 \! & \! 0 \! & \! 0 \\
2 \! & \! 1 \! & \! 0 \! & \! 1 \\
2 \! & \! 0 \! & \! 1 \! & \! 1 \\
0 \! & \! 0 \! & \! 0 \! & \! 1 
\end{bmatrix}\!, \,\Gamma_5 = \begin{bmatrix} 
1 \! & \! 1 \! & \! 1 \! & \! 0 \\
0 \! & \! 1 \! & \! 0 \! & \! 0 \\
0 \! & \! 0 \! & \! 1 \! & \! 0 \\
0 \! & \! 2 \! & \! 2 \! & \! 1 
\end{bmatrix}\!.
$$

\end{small} 
These are symplectic matrices with entries in $\mathbb{F}_3$, modulo 
scaling by $(\mathbb{F}_3)^* = \{\pm 1\} = \{1,2\}$.
A computation using \textsc{GAP} \cite{GAP} verifies that these groups are indeed isomorphic. In 
the notation of the atlas of simple groups, our Galois group (\ref{eq:weylrole})
 is the group $O_5(3)$.
 \end{proof}

\begin{remark}
The fundamental group of the configuration space of
six points in the Riemann sphere $\PP^1$ is the braid group $B_6$, which therefore maps
onto the finite group ${\rm Gal}_3$. One checks that the permutations and
matrices listed above satisfy the braid relations that define $B_6$:
$$
\Gamma_i \,\Gamma_{i+1} \,\Gamma_i \,=\,
\Gamma_{i+1} \,\Gamma_i\, \Gamma_{i+1} \quad \hbox{for $i=1,2,3,4$} \quad {\rm and} \quad
\Gamma_i \,\Gamma_j = \Gamma_j \,\Gamma_i \quad \hbox{for $| i - j | \geq 2$.}
$$
\end{remark}

We conclude from
 Theorem \ref{thm:25920}
that our recovery problem for cubics
is solvable in~radicals.

\begin{corollary}
Starting from a given ternary cubic $A$, the $39$ other solutions $(U,V)$ to the equation (\ref{eq:BUVequation})
can be written in radicals in the coefficients of the binary forms $A_2$ and $A_3$.
\end{corollary}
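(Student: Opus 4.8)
The plan is to invoke the solvability criterion from Galois theory: a polynomial system whose Galois group is solvable can be solved in radicals. The key obstacle is that Theorem~\ref{thm:25920} identifies $\mathrm{Gal}_3$ as the \emph{simple} group of order $25920$, which is emphatically \emph{not} solvable. So the statement cannot be a naive application of Galois solvability to the full system of $40$ solutions. The resolution — and the heart of the argument — is that we are not asked to solve for all $40$ solutions from scratch over the base field of the coefficients of $B$; rather, we are \emph{given} one solution $A$ (equivalently one pair $(U,V)$ with $B = U^3 - V^2$), and asked to recover the other $39$ over the field generated by the coefficients of $A_2$ and $A_3$.

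First I would set up the correct field of definition. Let $K$ be the field generated by the coefficients of $A_2$ and $A_3$ (equivalently, by $U$ and $V$ after adjoining the fixed constants $\sqrt[3]{4}$ and $\sqrt{-27}$, which are themselves radicals). Since $A$ is one known solution, its branch form $B = 4A_2^3 + 27A_3^2$ has coefficients in $K$. The relevant Galois group is now the \emph{stabilizer} in $\mathrm{Gal}_3$ of the point corresponding to our given solution $A$, acting on the remaining $39$ solutions. Concretely, $\mathrm{Gal}_3$ acts on the $40$ solutions as $\mathrm{PSp}_4(\mathbb{F}_3)$ acts on $\PP^3(\mathbb{F}_3)$ (by Theorem~\ref{thm:25920}), and the field $K$ corresponds to fixing one of these $40$ points. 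Thus the Galois group governing the recovery of the other solutions over $K$ is the point-stabilizer in $\mathrm{PSp}_4(\mathbb{F}_3)$ of a point of $\PP^3(\mathbb{F}_3)$.

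The main computation I would carry out — and the step I expect to be the crux — is to show that this point-stabilizer is solvable. The stabilizer of a point (a $1$-dimensional subspace) under the symplectic group is a maximal parabolic subgroup; its order is $25920/40 = 648 = 2^3 \cdot 3^4$. A group of order $648$ need not be solvable in general, but this particular parabolic subgroup has a normal series with abelian quotients: it fits into an extension of the Levi factor (a product of a cyclic torus with $\mathrm{Sp}_2(\mathbb{F}_3) \cong \mathrm{SL}_2(\mathbb{F}_3)$, whose own solvability is classical since $\mathrm{SL}_2(\mathbb{F}_3)$ is solvable of order $24$) by the unipotent radical (a $\{2,3\}$-group that is nilpotent, hence solvable). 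Chaining these solvable pieces shows the stabilizer is solvable. I would verify this directly in \textsc{GAP} \cite{GAP}, computing the derived series of the stabilizer and confirming it terminates at the trivial group — mirroring the computational style already used in the proof of Theorem~\ref{thm:25920}.

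Finally, having established that the Galois group of the extension $K(\text{other }39\text{ solutions})/K$ is solvable, the classical theorem of Galois yields that each coordinate of the $39$ remaining solutions $(U,V)$ lies in a radical tower over $K$. Tracing the tower back through the two auxiliary radicals $\sqrt[3]{4}$ and $\sqrt{-27}$ that relate $(U,V)$ to $(A_2,A_3)$, we conclude that the coefficients of the $39$ other solutions are expressible in radicals in the coefficients of $A_2$ and $A_3$, as claimed. I would remark that this is exactly the content of Clebsch's classical solution in \cite{ClebschShort, ClebschLong}: knowing one decomposition $B = U^3 - V^2$ breaks the symmetry and renders the passage to the remaining $39$ a solvable-by-radicals problem, even though the full monodromy group of the problem is simple.
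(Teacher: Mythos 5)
Your proposal is correct and follows the same overall strategy as the paper: pass from ${\rm Gal}_3$ to the stabilizer of the given solution, note its order is $25920/40 = 648$, show that this stabilizer is solvable, and invoke classical Galois theory. The only genuine difference is the certificate of solvability. You argue via the structure of the point stabilizer as a maximal parabolic subgroup of ${\rm PSp}_4(\mathbb{F}_3)$, chaining the unipotent radical (a nilpotent $3$-group of order $27$, not a $\{2,3\}$-group as you wrote, though this is harmless) with a Levi factor built from the solvable group ${\rm SL}_2(\mathbb{F}_3)$; the paper instead exhibits the Hesse group ${\rm ASL}_2(\mathbb{F}_3)$ of order $216$ as a normal subgroup of index $3$ in the stabilizer and cites its classical solvability as the Galois group of the nine inflection points of a plane cubic. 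Both certificates are valid and ultimately rest on the solvability of ${\rm SL}_2(\mathbb{F}_3)$; what the paper's choice buys is that the appearance of the inflection-point group is not merely an existence argument but is exploited constructively in Algorithm \ref{alg:clebsch}, where the explicit radical expressions for the other $39$ solutions are obtained precisely by computing the inflection lines of an auxiliary cubic. Your parabolic-subgroup argument is cleaner as pure group theory but does not by itself point toward the explicit formulas. Your additional care about the field of definition and the auxiliary radicals $\sqrt[3]{4}$ and $\sqrt{-27}$ is a point in your favor that the paper leaves implicit.
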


\begin{proof}
Viewing ${\rm Gal}_3$ as a group of permutations on the $40$ solutions,
we consider the stabilizer subgroup of the one given solution. That stabilizer 
has order $25920/40 = 3 \cdot 216$,
and this is the Galois group of the other $39$ solutions.
It contains the Hesse group ${\rm ASL}_2(\mathbb{F}_3)$ as a 
 normal subgroup of index $3$. Recall that ${\rm ASL}_2(\mathbb{F}_3)$
 is solvable, and has order $216$. It is the Galois group
 of the nine inflection points of a plane cubic \cite[Section~II.2]{HarrisGalois}.
 Therefore the stabilizer is solvable, and hence $(U,V)$ is expressable in radicals over $(A_2,A_3)$.
 \end{proof}

Clebsch explains how to write the
$39$ solutions in radicals in the coefficients of $(A_2,A_3)$.
We now give a brief description of his algorithm, 
which reveals the inflection points of a~cubic.

\begin{algo} \label{alg:clebsch}
Our \underbar{input} is a pair $(A_2,A_3)$ that represents a cubic $A$ as in (\ref{eq:othernf}).
We set $\tilde U = \sqrt[3]{4} \cdot A_2 $ and
$\tilde V = \sqrt{-27} \cdot A_3$. Our \underbar{output} is the list of all pairs
$(U,V)$ that satisfy
\begin{equation}
\label{eq:UVsystem}  U^3 - {\tilde U}^3 \,\, = \,\,
      V^2 - {\tilde V}^2. 
\end{equation}      
Real solutions of (\ref{eq:system3})
correspond to pairs $(U,V)$ such that $U$ and $iV$ are real, where $i = \sqrt{-1}$.
To solve (\ref{eq:UVsystem}), consider the cubic 
$\mathcal{C} = \{ (x:y:z) \in \PP^2: z^3 - 3\tilde U z + 2 \tilde V = 0\} $.
The nine inflection lines of $\mathcal{C}$ are defined by
the linear forms $\xi = \alpha x + \beta y $
such that  $\xi^3 - 3\tilde U \xi + 2 \tilde V $ is the cube of a  linear form $\eta = \gamma x + \delta y$.
We write the coefficients $\alpha,\beta$ of such $\xi$ in radicals. Here, the Galois group is
${\rm ASL}_2(\mathbb{F}_3)$.
We next compute $(\gamma,\delta)$ from
$\eta^3 =  \xi^3 - 3\tilde U \xi + 2 \tilde V$. 
For each of the pairs $(\alpha,\beta)$ above,
this system has  three solutions $ (\gamma,\delta) \in \CC^2$.
This leads to $27$ vectors $(\alpha,\beta,\gamma,\delta)$, all expressed in radicals.
For each of these we set $U = \tilde U - \xi^2 - \xi \eta - \eta^2$.~Then
$$ U^3 - ({\tilde U}^3 - {\tilde V}^2) \, = \, - \frac{3}{4}
\bigl( \eta^3 + 2 \eta^2 \xi + 2 \eta \xi^2 + \xi^3 - 2 \eta \tilde U - \xi \tilde U\bigr)^2 . $$
The square root of the right hand side is a binary cubic $V$, and the pair $(U,V)$ satisfies 
(\ref{eq:UVsystem}). In this manner we construct $27$ solutions to (\ref{eq:system3}),
three for each inflection point of the curve $\mathcal{C}$. Three of the $9$ inflection points are real,
and each of them yields one real solution to (\ref{eq:system3}).

We finally compute the $12 = 39-27$ remaining solutions, of which four are real.
For this, we label the inflection points of $\mathcal{C}$ by $1,2,\ldots,9$ such that the following triples
are collinear:
\begin{equation}
\label{eq:twelve}
 \underbar{123},456,789,\quad \underbar{147},\underbar{258},\underbar{369},\quad 159,267,348,
 \quad 168,249,357 . 
\end{equation} 
If $1,2,3$ are real
and $\{4,7\}$, $\{5,8\}$, $\{6,9\}$ are complex conjugates, then precisely the four underlined
lines are real. Our labeling agrees with that used by Clebsch in
\cite[\S 9, page~50]{ClebschLong}.

We now execute the formulas in \cite[\S 11, \S 12]{ClebschLong}.
For each of the $12$ lines in (\ref{eq:twelve}), we compute
two solutions $(U,V)$ of  (\ref{eq:UVsystem}).
These are expressed rationally in the data $(\alpha,\beta)$ found above.
Each new solution $(U,V)$ arises twice from each triple of lines in (\ref{eq:twelve}),
so we get $12$ in total.
\end{algo}

\section{Quartics:  Del Pezzo Strikes Again}
\label{sec5}

This section features threads from algebraic geometry that inform
our recovery problem for quartics. We begin
with the extension of
Corollary \ref{cor:from40to8} from $d=3$ to $d=4$.
In other words, we determine the
plane counterparts to the real Hurwitz numbers
in Theorem \ref{thm:realcount4}.

\begin{theorem} \label{thm:realcount4planar}
Consider the polynomial system (\ref{eq:system4}) for
recovering quartics, where the parameters $\beta_{ij}$ are generic  reals.
The number of real solutions
equals $8$, $16$, $24$, $32$, $64$, or~$120$.
\end{theorem}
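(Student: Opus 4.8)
The plan is to count real solutions as the fixed points of complex conjugation acting on the $120$ complex solutions, and to control this count through the geometry of the associated genus-$4$ curve and degree-one del Pezzo surface.

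First I would use the factorization $\pi=\mu\circ\nu$ from the proof of Theorem~\ref{thm:120}. Since $\mu$ in (\ref{eq:mapmu}) is birational onto $\mathcal{V}_4$ and defined over $\RR$, a generic real form $B\in\mathcal{V}_4$ has a unique real preimage $(U_2,U_3)$, so the $120$ solutions of (\ref{eq:system4}) form the fiber $\nu^{-1}(U_2,U_3)$. Complex conjugation permutes this fiber, and a solution is real exactly when it is fixed. Working in the normal form (\ref{eq:othernf}), $A=z^4+A_2z^2+A_3z+A_4$, one eliminates $A_4$ using the first equation of (\ref{eq:mapnu}) and reduces $\nu$ to the single condition $A_3^2=s^3+U_2 s+U_3$, where $s=-\tfrac{2}{3}A_2$. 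Thus the solutions are precisely the binary quadratic forms $s(x,y)$ for which the binary sextic $s^3+U_2 s+U_3$ is a perfect square, and the sign of $A_3$ is irrelevant because $(A_2,A_3,A_4)$ and $(A_2,-A_3,A_4)$ agree in the weighted projective space.

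Next I would read this condition geometrically. The pair $(U_2,U_3)$ is the Weierstrass datum of a degree-one del Pezzo surface and, equivalently, of the smooth genus-$4$ curve $C$ given as the $3{:}1$ cover $Z^3+U_2 Z+U_3=0$ of $\PP^1$, whose twelve branch points are the zeros of $B$. A section $Z=s(x,y)$ meets $C$ where $s^3+U_2 s+U_3$ vanishes, so requiring this sextic to be a perfect square says exactly that $Z=s$ is tritangent to $C$. Following Vakil~\cite{Ravi}, the $120$ solutions are therefore in a bijection, defined over $\RR$, with the $120$ tritangent planes of the canonical sextic model of $C$, that is, with its $120$ odd theta characteristics. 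Consequently the number of real solutions equals the number of conjugation-fixed (real) odd theta characteristics of the real curve $C$.

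The final step is the fixed-point count. Complex conjugation acts on $\mathrm{Jac}(C)[2]\cong(\mathbb{F}_2)^8$ preserving the Weil pairing, hence as an involution in $\mathrm{Sp}_8(\mathbb{F}_2)$, and the $120$ odd theta characteristics are the odd quadratic refinements of this pairing, permuted affinely by that involution. I would classify the involutions that actually arise as real structures of such curves $C$ --- these are governed by the real topology of $C$, namely the number of ovals of $C(\RR)$ and whether $C$ is dividing --- and compute the number of fixed odd forms in each case, obtaining exactly $8,16,32,64,120$; the value $120$ corresponds to conjugation acting trivially on the $2$-torsion. Realizability of all five values can be exhibited by blowing up $\PP^2$ at eight real or complex-conjugate points to produce real del Pezzo surfaces of each topological type, and confirmed numerically with \texttt{HomotopyContinuation.jl}. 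The hard part is precisely this classification: unlike the cubic case, where Proposition~\ref{prop:real_abstract_hn_degree_3} and Corollary~\ref{cor:from40to8} give the single value $8$ independently of the number $r$ of real branch points, here several real structures occur, so the heart of the proof is determining which involution classes are geometrically realizable and computing their fixed-point numbers, while ruling out every other count.
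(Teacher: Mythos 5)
Your setup is sound and matches the paper's framework: you correctly factor through $\nu$, reduce the fiber to the condition that $s^3+U_2s+U_3$ be a perfect square, and identify the $120$ solutions with the tritangents (odd theta characteristics) of the genus-$4$ curve $C$ on the quadric cone, with real solutions corresponding to real tritangents. The paper makes the same identification, justifying the ``real $A$ iff real tritangent'' step via Recillas' trigonal construction \cite[Proposition 4.1]{Ravi} and via Algorithms \ref{alg:get8} and \ref{alg:get120}, which work over the ground field.

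The gap is in your final step, which you yourself flag as ``the hard part'' and do not carry out: classifying the conjugation involutions on $\mathrm{Jac}(C)[2]$ that actually arise and counting their fixed odd quadratic forms. This is not a routine verification. For a general real genus-$4$ curve the number of real odd theta characteristics depends on the topological type of $C(\RR)$ and takes values outside $\{8,16,32,64,120\}$; so without pinning down exactly which real structures occur for the curves in this family, your argument cannot rule out the extraneous counts. The paper sidesteps the whole classification by realizing $C$ as the branch curve of a real degree-one del Pezzo surface obtained by blowing up a conjugation-stable configuration $\mathcal{P}$ of eight points ($2r$ real, $4-r$ conjugate pairs), and then invoking the explicit rational description of all $120=8+28+56+28$ tritangents from \cite[\S 2]{KA}: $8$ exceptional curves, $28$ lines through pairs, $56$ conics through five points, and $28$ doubled cubics. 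Conjugation then acts on the $120$ tritangents simply by permuting these combinatorial labels according to the permutation of $\mathcal{P}$, and counting the fixed labels is an elementary computation yielding $8,16,32,64,120$ for $r=0,1,2,3,4$. To complete your proof you would either need to carry out the Gross--Harris-style fixed-point computation \emph{and} prove that only the involution classes induced by such point configurations occur for generic real $B\in\mathcal{V}_4$, or simply adopt the paper's combinatorial route; as written, the decisive enumeration is asserted rather than proved.
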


\begin{proof}
We use the connection to del Pezzo surfaces of degree one and tritangents of space sextic
curves, described by Vakil in~\cite{Ravi}. We recall some parts of his construction,
and we examine what changes when passing from the complex numbers to the real numbers. 

The open set $\mathcal{B}$ of $\PP(3^5,2^7)$ is birational to the hypersurface $\mathcal{Z}$ in $\PP^{12}$ which parametrizes possible branch data. Hence we obtain from a general binary form $B$ in $\mathcal{Z}$ an element $(U_2,U_3)$ of $\PP(3^5,2^7)$. Fix the space sextic $\mathcal{C}$ by $X_2^3 + U_2(X_0,X_1)X_2 + U_3(X_0,X_1)$ in the singular cone $\PP(1,1,2)$. Sending $(U_2,U_3)$ to $\mathcal{C}$ gives the isomorphism $\mathcal{B} \cong \mathcal{E}$ in \cite[Theorem 2.12]{Ravi}, where $\mathcal{E}$ is the space of genus $4$ curves
 whose canonical model 
lies on a quadric cone in $\PP^3$.

In the complex setting taking a double cover of $\PP(1,1,2)$ branched over $\mathcal{C}$ gives a del Pezzo surface of degree one, together with involution $\iota$, called the {\em Bertini involution}.
Over the real numbers we obtain two different del Pezzo surfaces, depending on the choice of a sign. We denote the two distinct real surfaces by $X$ and $X'$. Upon base changing to the complex numbers,
the surfaces $X_\CC$ and $X'_\CC$ become isomorphic. 
 
The possible topological types of real del Pezzo surfaces have been classified by several authors.
We here follow the work of Russo \cite{Russo}. Blowing down a $(-1)$-curve $L$ on $X_\CC$ gives a del Pezzo surface of degree $2$ which is a double cover of a unique real plane quartic. The Bertini involution $\iota$ of $X$ maps $L$ to a $(-1)$-curve giving the same quartic. The pair $\{L,\iota(L)\}$ produces a real quartic curve if and only if is invariant under the complex conjugation on $X_\CC$. In this case we call it a {\em real Bertini pair}.
The real plane Hurwitz number is the number of  real Bertini pairs on $X$. Mapping $L$ to a line in the cone $\PP(1,1,2)$ gives a tritangent to 
the space sextic $\mathcal{C}$ \cite[Example 4]{Russo}. In this way,
the real tritangents of $\mathcal{C}$ are the same as the real Bertini pairs. The former have been counted. We reproduce the table in \cite[Corollary 5.3]{Russo} in Table \ref{table:real_del_pezzo} and explain his notation for the types of del Pezzo surfaces. 

First, we have the del Pezzo surfaces of blowup type: The surface obtained by blowing up $2r$ real points and $4-r$ complex conjugate pairs of points is denoted by $\PP^2(2r,8-2r)$. With the Bertini involution one can define the real structure denoted by $\mathbb{B}_1$. Similarly, the Geiser involution on real del Pezzo surfaces of degree $2$ give rise to the real surface $\mathbb{G}_2$. Finally, the birational de Jonquieres involutions of the plane give rise to del Pezzo surfaces $\mathbb{D}_2$ and $\mathbb{D}_4$, where the index indicates the degree.  
\end{proof}
\begin{table}
\centering
\begin{tabular}{|p{2 cm}|p{2 cm}|p{2 cm}|}
\hline
$\mathfrak{h}_4^{real}(B)$ & $X$ & $X'$ \\ 
\hline 
$120$ & $\mathbb{B}_1$ & $\PP^2(8,0)$ \\
$64$ & $\mathbb{G}_2(1,0)$ & $\PP^2(6,2)$ \\
$32$ & $\mathbb{D}_2(1,0)$ & $\PP^2(4,4)$ \\
$16$ & $\mathbb{D}_4(1,2)$ & $\PP^2(2,6)$ \\
$8$ & $\PP^2(0,8)$ & $\PP^2(0,8)$ \\
$24$ & $\mathbb{D}_4(3,0)^0_3$ & $\mathbb{D}_4(3,0)^0_3$ \\
$24$ & $\mathbb{D}_4(3,0)^1_2$ & $\mathbb{D}_4(3,0)^1_2$ \\
\hline
\end{tabular}
\caption{$\mathfrak{h}_4^{real}(B)$ is the number of real Bertini pairs}
\label{table:real_del_pezzo}
\end{table}

\begin{remark} \label{rmk:consistent}
Algorithm \ref{algo:recovery4} is consistent with Theorem~\ref{thm:realcount4planar}.
Given binary forms $U_2,U_3$ with real coefficients, it  outputs
$\,8$, $16$, $24$, $32$, $64$ or $120$ real quartics in the subspace~$L_4$.
\end{remark}

\begin{corollary} \label{cor:rationalinstance}
From any general configuration $\mathcal{P} = \{P_1,P_2,\ldots,P_8\}
\subset \PP^2$ with coordinates in $\QQ$,
we obtain an instance of (\ref{eq:system4}) whose $120$ complex solutions  $A$
all have coefficients in $\QQ$.
\end{corollary}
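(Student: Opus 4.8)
The plan is to follow the geometric construction in the proof of Theorem~\ref{thm:realcount4planar}, but to track the field of definition at every step, replacing ``real'' by ``rational'' throughout. The crucial observation is that when $\mathcal{P}$ consists of eight rational points in general position, each of the $120$ geometric objects producing the tritangent planes of the branch sextic $\mathcal{C}$ is cut out by linear conditions with coefficients in $\QQ$, so there is no analogue of the real/non-real dichotomy that governed Theorem~\ref{thm:realcount4planar}.

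First I would produce the instance $B$ of (\ref{eq:system4}) over $\QQ$. The cubics through $\mathcal{P}$ form a pencil and the sextics vanishing doubly along $\mathcal{P}$ form a four-dimensional linear system; each is the kernel of a matrix of evaluation conditions with rational entries, so for general rational $\mathcal{P}$ I may choose $u,v$ and $s$ with rational coefficients. The resulting branch sextic $\mathcal{C}$ on the cone $V(x_0x_2-x_1^2)$ then has equation $s^3+U_2(u,v)s+U_3(u,v)$ with $U_2,U_3$ rational, and $B=4U_2^3+27U_3^2=\mu(U_2,U_3)$ is a binary form of degree $12$ with coefficients in $\QQ$. Genericity of $\mathcal{P}$ ensures $B$ has $12$ distinct roots and lies on $\mathcal{V}_4$, so it is a bona fide instance of (\ref{eq:system4}).

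Next I would show the $120$ solutions are rational. By \cite[\S 2]{KA} the $120=8+28+56+28$ tritangent planes of $\mathcal{C}$ are obtained by rational arithmetic from $\mathcal{P}$: the eight exceptional fibers lie over rational points; the $28$ lines join rational pairs $\{P_i,P_j\}$; the $56$ conics pass through five of the rational points; and the $28$ cubics are determined by rational incidence conditions. Since all eight points are rational, every one of these objects, and hence every tritangent plane, is defined over $\QQ$. Finally, the isomorphism $\mathcal{E}\simeq\mathcal{B}$ of \cite[Theorem~2.12]{Ravi} identifies these tritangents with the preimages $\nu^{-1}(U_2,U_3)$, that is, with the solutions $A$ of (\ref{eq:system4}), and the passage from a tritangent to its quartic preserves the base field. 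Thus each of the $120$ quartics $A$ has coefficients in $\QQ$.

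The main obstacle is this last step: verifying that the passage from a rational tritangent plane to the corresponding quartic introduces no field extension. I would handle it exactly as in the proof of Theorem~\ref{thm:realcount4planar}, by invoking the field-preserving nature of Recillas' trigonal construction as implemented in Algorithms~\ref{alg:get8} and \ref{alg:get120}; alternatively one checks directly that each formula in \cite[Proposition~4.1]{Ravi} involves only rational operations on rational input. Genericity of $\mathcal{P}$ guarantees that the $120$ tritangents are distinct and the construction is everywhere non-degenerate, so the $120$ rational quartics are genuinely distinct solutions of (\ref{eq:system4}).
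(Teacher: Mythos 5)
Your proposal is correct and follows essentially the same route as the paper: the paper's own proof constructs each of the $120$ quartics over $\QQ$ as the branch locus of the $2:1$ map $(u:v:w)$ attached to a point of $\mathcal{P}$ (eight at a time, with quadratic Cremona transformations supplying the remaining $112$), which is exactly the rational-arithmetic content of Algorithms \ref{alg:get8} and \ref{alg:get120} that you invoke for the key field-preservation step. The only organizational difference is that you route through the $8+28+56+28$ tritangent count from Theorem \ref{thm:realcount4planar}, whereas the paper bypasses the tritangents here and justifies the branch-locus construction directly by a linear-systems and adjunction computation.
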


\begin{proof}
We construct the $120$ quartics $A$
with rational arithmetic from the coordinates in $\mathcal{P}$. Fix $j$ and
let $w$ be a cubic that vanishes on $\mathcal{P} \backslash \{P_j\}$
but does not vanish at $P_j$. Consider the
$2$-$1$ map $\PP^2 \dashrightarrow \PP^2$ given by $(u:v:w)$.
The branch locus of this map is a quartic curve.

We claim that this is the quartic $Q$ that corresponds to the
exceptional fiber of the blow-up at $P_j$ in Vakil's construction. Indeed, let $X$ be the blowup of $\mathbb{P}^2$ at all the $9$ base
points of the pencil $(u:v)$ and let $C\subseteq X$ be a general curve in the pencil. The intersection of the quartic $Q$ with $C$ is given by \cite[Proposition 3.1]{Ravi} as the set of points $p \in C$ such that $\mathcal{O}_{C}(2p) \cong \mathcal{O}_{C}(E_0+E_1)$. 
Alternatively, these are exactly the branch points of the map $C\to \mathbb{P}^1$ induced by the linear system of $\mathcal{O}_{C}(E_0+E_1)$. Thus, we are done if we can prove that the restriction of the map $(u:v:w)\colon X \to \mathbb{P}^2$ is given exactly by the linear system above. However, the map of $(u:v:w)$ on $X$ corresponds to the complete linear system $L = 3H-E_2-\dots-E_8 = -K_X+E_0+E_1$ so that $\mathcal{O}_{C}(L) \cong \mathcal{O}_{C}(-K_X+E_0+E_1)$ and we need to show that $\mathcal{O}_C(-K_X)\cong \mathcal{O}_C$. But this follows from the adjunction formula, using the fact that $C$ is an elliptic curve moving in a pencil.

Thus, we can recover the quartic as this branch locus of our
$2$-$1$ map $\PP^2 \dashrightarrow \PP^2$.
 The ternary form $A$ that defines this branch locus can be computed from the ideals of $\mathcal{P}$ and $P_j$
using Gr\"obner bases, and this uses rational arithmetic over the ground field.
This yields eight of the $120$ desired quartics.
The remaining $112$ quartics are found by repeating this process
after some Cremona transformations  have been applied to 
the pair $(\PP^2,\mathcal{P})$. These transformations use only
rational arithmetic and they preserve
the del Pezzo surface, but they change the collection of eight
$(-1)$-curves that are being blown down. 
\end{proof}

The following two algorithms provide more details on the steps in the proof above.

\begin{algo} \label{alg:get8}
The \underbar{input} is a set $\mathcal{P}$ of eight rational points in $\PP^2$.
The \underbar{output} is eight quartics $A \in \QQ[x,y,z]$
that share the same $z$-discriminant. These are the quartics corresponding to the eight exceptional divisors $E_i$.
Write $(r:s:t)$ for the coordinates of the $\PP^2$ that contains $\mathcal{P}$.
We start by setting up
the following ideal in $\,\QQ[r,s,t,x,y,z]$:
\begin{equation}
\label{eq:jacobian}  I \,\,\,=\,\,\,
\bigl\langle \hbox{$ 2 \times 2$ minors of} \,
\begin{pmatrix} u & v & w \\ x & y & z \end{pmatrix}
\bigr\rangle \,\,+\,\,
\biggl\langle
{\rm det} \! \begin{pmatrix}
 \partial u / \partial r &  \partial u / \partial s &  \partial u / \partial t \\
  \partial v / \partial r &  \partial v / \partial s &  \partial v / \partial t \\
 \partial w / \partial r &  \partial w / \partial s &  \partial w / \partial t \\
\end{pmatrix} \biggr\rangle.
\end{equation}
Saturate $I$ with respect to the ideal of $\mathcal{P}\backslash \{P_j\}$.
After that step, eliminate the three variables $r,s,t$. The result is a
principal ideal in $\QQ[x,y,z]$ whose generator is the quartic $A$.
We perform the above two steps for $j=1,2,\ldots,8$, and we output the resulting eight quartics.
\end{algo}

Our second algorithm concerns the Cremona transformations mentioned in the proof.

\begin{algo} \label{alg:get120}
Our \underbar{input} is a set $\mathcal{P}$ of eight rational points in $\PP^2$.
The \underbar{output} is the remaining set of $112$ quartics $A \in \QQ[x,y,z]$
that also have same $z$-discriminant. Fix a basis of cubics $u,v$ passing through $\mathcal{P}$. Choose a triple $\{P_i,P_j,P_k\}$
and set up the associated quadratic Cremona transformation $\PP^2 \dashrightarrow \PP^2$.
This transforms $\mathcal{P}$ into a new configuration $\mathcal{P}'$ and it corresponds to an automorphism of the del Pezzo surface. Apply Algorithm \ref{alg:get8}
to $\mathcal{P}'$ and record the resulting eight quartics. In applying Algorithm \ref{alg:get8}, we need some care in choosing a basis $\{u',v'\}$ for the cubics vanishing at $\mathcal{P}'$: we should choose them in such a way that the elliptic pencil $(u':v')\colon \mathbb{P}^2 \dashrightarrow \mathbb{P}^1$ is the composition of the original pencil $(u:v)\colon \mathbb{P}^2 \dashrightarrow \mathbb{P}^1$ and the quadratic Cremona transformation. This ensures that the quartics share the same $z$-discriminant with the original eight quartics.  We now repeat this process until
$120$ inequivalent quartics have been found.
This is possible because the Cremona transformations 
act transitively on the set of $(-1)$ curves on the del Pezzo surface. 
\end{algo}

\begin{remark}
 If we use Algorithm \ref{alg:get120} to construct branched covers from real plane quartics 
 then we cannot obtain  $24$ real solutions. This is because  the associated surfaces are  not of blowup type, see Table \ref{table:real_del_pezzo}. Nonetheless we found instances with real plane Hurwitz  $24$ by constructing a sextic $\mathcal{C}$ which has this number of tritangents. It can be seen in Figure \ref{figure:24_sextic}. The equation of $\mathcal{C}$ gives us $(U_4,U_6)\in\mathcal{B}$ and with Algorithm \ref{algo:recovery4} we recover $24$ quartics.
\end{remark}

\begin{figure}[h]
    \centering
    \includegraphics[width=0.5\linewidth]{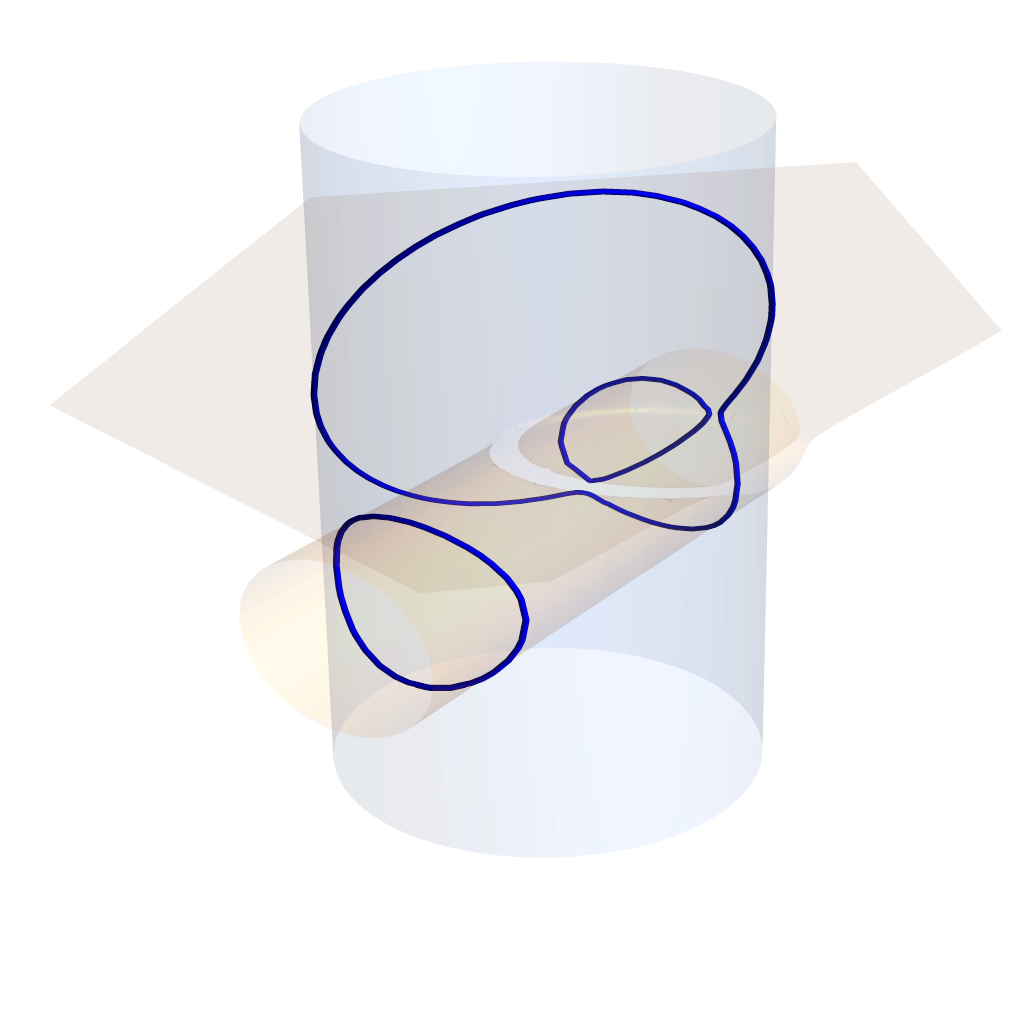}  \vspace{-0.5in}
    \caption{A space sextic with exactly $24$ real tritangents.}
    \label{figure:24_sextic}
   \end{figure}

Using Algorithm \ref{alg:get8}, we can find one quartic $A$ and hence
the binary form $B = {\rm discr}_z(A)$  using rational arithmetic.
However, there is an even more direct method which also shows that
$B$ does not depend on the choice of the point $P_j$. Namely,
$B$ encodes the $12$ singular curves in the pencil of
cubics through $\mathcal{P}$. Based on this observation, we derive our final~result:

\begin{theorem} \label{thm:rleqs}
Let $B $ be the binary form of degree $12$ computed from  a real configuration $\mathcal{P}$.
Suppose $2r$ of the $8$ points in $\, \mathcal{P}$ are real
and $2s$ of the $12$ zeros of $B$  are real.
Then $r \leq s$. Conversely, for all valid parameters $r \leq s$, some
configuration $\mathcal{P}$ realizes the pair $(r,s)$.
\end{theorem}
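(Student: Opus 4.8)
The plan is to reformulate the statement in terms of the rational elliptic surface attached to the pencil and then extract the inequality from a single Euler characteristic computation. By the discussion preceding the theorem, $B$ records the $12$ singular members of the pencil of cubics through $\mathcal{P}$, so the zeros of $B$ are the critical values of the associated elliptic fibration. First I would resolve the pencil: since $\mathcal{P}$ is conjugation invariant, the pencil is defined over $\RR$, and its ninth base point $P_9$ is fixed by conjugation, hence real; for general $\mathcal{P}$ the nine base points are distinct, and blowing them up yields a smooth real rational elliptic surface $X \to \PP^1$ with $12$ distinct nodal ($I_1$) fibers. A zero of $B$ is real exactly when the corresponding nodal cubic is real, and such a cubic has its node at its unique (necessarily real) singular point; the two branches there are then either both real (a crunode) or complex conjugate (an acnode). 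Write $n_+$ and $n_-$ for the numbers of real singular fibers of these two kinds, so that $n_+ + n_- = 2s$.

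The main step is to compute $\chi(X(\RR))$ in two ways. On one hand, $X(\RR)$ is $\mathbb{RP}^2$ blown up only at the real base points; among the nine base points exactly $2r+1$ are real (the $2r$ real points of $\mathcal{P}$ together with $P_9$), while the remaining $8-2r$ form conjugate pairs that do not meet the real locus. Each real blow-up drops the Euler characteristic by one, so $\chi(X(\RR)) = \chi(\mathbb{RP}^2) - (2r+1) = -2r$. On the other hand, applying $\chi(\mathrm{total}) = \chi(\mathrm{base})\,\chi(\mathrm{fiber}) + \sum_c\bigl(\chi(F_c(\RR)) - \chi(F_{\mathrm{gen}}(\RR))\bigr)$ over the base $\mathbb{RP}^1$, where a smooth real fiber has $\chi = 0$: a crunodal fiber is a circle with two real points glued, contributing $-1$, and an acnodal fiber is a circle together with one isolated point, contributing $+1$. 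Hence $\chi(X(\RR)) = n_- - n_+$. Comparing the two computations gives $n_+ - n_- = 2r$, so $n_+ = s+r$ and $n_- = s-r$; the inequality $r \le s$ is now exactly the statement $n_- \ge 0$.

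For the converse I would fix $r$ and vary $\mathcal{P}$ through real configurations with $2r$ real points and $4-r$ conjugate pairs, tracking the real zeros of $B$. The quantity $n_+ - n_- = 2r$ is a deformation invariant, while two real zeros of $B$ can be created or destroyed only when a conjugate pair of critical values collides on the real axis (passing through a degenerate fiber) and splits; such a transition changes $s$ by one and, by invariance of $n_+-n_-$, produces or removes exactly one crunode together with one acnode. It therefore suffices to exhibit, for each fixed $r$, configurations attaining the extreme values $s=r$ (no acnodes) and $s=6$ (all twelve fibers real), and to connect them through real configurations; every intermediate value of $s$ is then realized along the way. Concretely I would produce these endpoint configurations by explicit construction and a computer search built on Algorithms \ref{alg:get8} and \ref{alg:get120}, verifying the real root count of $B$ in each case.

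The main obstacle is the converse rather than the inequality: the Euler characteristic identity is robust once genericity (twelve distinct $I_1$ fibers) is assumed, but realizing $(r,s)$ at the extreme end---above all $s=6$ for small $r$, where one needs eight points with few or no real members yet all twelve singular cubics real---requires either an explicit family or a connectedness argument for the relevant stratum of real configurations. Care is also needed to keep the deformations in general position except at the isolated transitions where a single conjugate pair of branch points becomes real, so that $s$ indeed moves by exactly one at each step.
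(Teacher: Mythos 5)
Your proof of the inequality $r\le s$ is correct but takes a genuinely different route from the paper. The paper cites Welschinger theory: the signed count $W_r(\PP^2,3)$ of real rational cubics through the configuration satisfies $W_r(\PP^2,3)\le 2s$ by definition, and Shustin's tropical computation gives $W_r(\PP^2,3)=2r$. Your Euler characteristic computation on the real rational elliptic surface proves the same identity $n_+-n_-=2r$ from scratch: the two evaluations of $\chi(X(\RR))$ (via the blow-up description, giving $-2r$ since exactly $2r+1$ of the nine base points are real, and via the fibration over $\mathbb{RP}^1$ with crunodal fibers contributing $-1$ and acnodal fibers $+1$) are both correct, and the conclusion $n_-=s-r\ge 0$ follows. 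What your approach buys is self-containedness --- no appeal to the invariance of Welschinger numbers or to a tropical count --- at the cost of needing the genericity hypotheses (nine distinct base points, twelve distinct $I_1$ fibers) made explicit; what the paper's citation buys is brevity. The two arguments are of course close cousins, since $n_+-n_-$ \emph{is} the Welschinger count for this pencil.

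For the converse, both you and the paper ultimately fall back on explicit examples, so you are not at a disadvantage there: the paper simply reports that sampling rational configurations produced instances for all pairs $(r,s)$ with $0\le r\le s\le 6$ and $r\le 4$. Your wall-crossing reduction to the two endpoint values $s=r$ and $s=6$ is an attractive idea, but as written it has a gap: you assert without proof that the stratum of configurations with $2r$ real points can be connected by a path along which the only degenerations are simple collisions of a conjugate pair of zeros of $B$ on the real axis, i.e.\ that the relevant discriminant does not disconnect the stratum or force jumps of $s$ by more than one. Unless you establish that connectedness (or simply exhibit all pairs directly, as the paper does), the intermediate values of $s$ are not yet accounted for.
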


\begin{proof}
The $12$ points in $\mathbb{P}^1$ which are zeros of $B$ correspond to the singular fibers under the map $\mathbb{P}^2 \rightarrow \mathbb{P}^1: (x:y:z) \mapsto (u:v)$.
This  map is given by two cubics $u$,$v$ through~$\mathcal{P}$. Algebraically,
we seek points $(\alpha:\beta) \in \mathbb{P}^1$ such that the cubic  $ \beta \cdot u - \alpha \cdot v$  is singular. There are
$12$ such singular cubics, and the question is how many of those can be~real.
 The {\em Welschinger invariant}
 $W_r(\mathbb{P}^2,3)$ is a signed count of real singular cubics passing through $2r$ real points and $4-r$ pairs of complex conjugate points. One counts a singular cubic with a hyperbolic node with a positive sign and one with an elliptic node with a negative sign. By definition, $W_r(\mathbb{P}^2,3)\leq 2s$, the number of real solutions. Example 4.3 in \cite{Shustin} provides a tropical computation of the Welschinger invariants for cubics and yields $W_r(\mathbb{P}^2,3)=2r$. We conclude $2r\leq 2s$.
The last sentence in Theorem  \ref{thm:rleqs}
is proved with a direct  calculation. By sampling from rational configurations $\mathcal{P} =
\{P_1,\ldots,P_8\}$,
we found instances for all $(r,s)$ with $0 \leq r \leq s \leq 6$ and $r \leq 4$. These can be found at our
 {\tt MathRepo} website
\href{https://mathrepo.mis.mpg.de/BranchPoints/}{https://mathrepo.mis.mpg.de/BranchPoints}$\,$.
\end{proof}

\begin{remark}
If we take one real quartic $Q$ corresponding to our real configuration $\mathcal{P}$, 
then we can see which of the $2r$ real tangents belong to elliptic nodal cubics. To see this recall how to construct an elliptic fibration from $Q$. 
We blow up the point $(0:0:1)$ and then take a double cover branched over the quartic \cite[Section 2.1]{Ravi}. The nodal fibers are the preimages of the tangents of $Q$ passing through the base point. The ovals of $Q$ divide 
$\PP^2_\RR$
 into connected components. We mark the component containing $(0:0:1)$ with a $+$, and we
 proceed so that adjacent components have opposite signs. Let $L$ be one of the $2r$ real tangents and $q$ be its points of tangency with $Q$. If $L$ lies in a positive component in a neighborhood of $q$, then $L$ gives a hyperbolic node. Otherwise the preimage of $L$ in the fibration is an elliptic node. One proves this by writing down the equation of the double cover in a suitable affine neighborhood of $q$. The case of $10$ hyperbolic nodal curves can be seen in Figure \ref{figure:12_tangents}.
\end{remark}
\begin{figure}[h]
    \centering
    \includegraphics[width=0.5\linewidth]{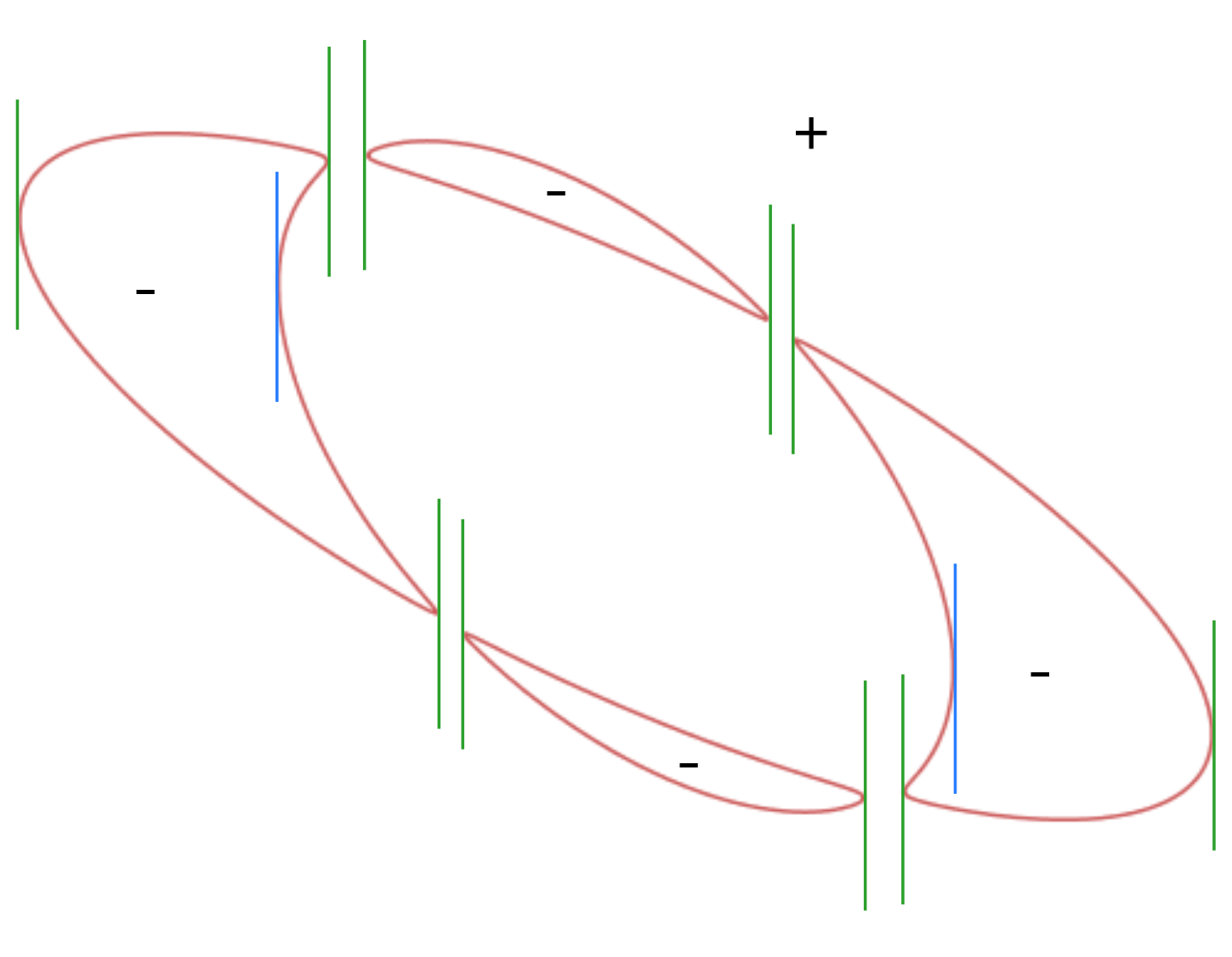}
    \caption{A real quartic with $12$ real tangents}
    \label{figure:12_tangents}
\end{figure}

\bigskip

\subsection*{Acknowledgements}
We thank Arthur Bik, Lou-Jean Cobigo, 
Yelena Mandelstham, Jared Ongaro, Boris Shapiro, Simon Telen, Rainer Sinn and Mario Kummer 
for helpful discussions. Furthermore we thank the anonymous referees for careful reading and useful comments.
Hannah Markwig, Victoria Schleis and Bernd Sturmfels were supported by the Deutsche Forschungsgemeinschaft (DFG, German Research Foundation), Project-ID 286237555, TRR 195. 
Javier Sendra--Arranz received the support of a fellowship from the ``la Caixa'' Foundation (ID 100010434).
  The fellowship code is LCF/BQ/EU21/11890110.

\medskip

\bigskip
\bigskip 

\noindent
\footnotesize
{\bf Authors' addresses:}

\smallskip

\noindent Daniele Agostini,
Universit\"at T\"ubingen
\hfill {\tt daniele.agostini@uni-tuebingen.de}

\noindent Hannah Markwig,
Universit\"at T\"ubingen
\hfill {\tt hannah@math.uni-tuebingen.de}

\noindent Clemens Nollau,
Universit\"at T\"ubingen
\hfill {\tt clemens.nollau@uni-tuebingen.de}

\noindent Victoria Schleis,
Universit\"at T\"ubingen
\hfill {\tt victoria.schleis@student.uni-tuebingen.de}

\noindent Javier Sendra--Arranz,
MPI-MiS Leipzig
\hfill {\tt javier.sendra@mis.mpg.de}
             
\noindent Bernd Sturmfels,
MPI-MiS Leipzig and UC Berkeley
\hfill {\tt bernd@mis.mpg.de}

\end{document}